\numberwithin{equation}{section}
\chardef\bslash=`\\ % p. 424, TeXbook
\newtheorem{theorem}{Theorem}[section]
\newtheorem{corollary}[theorem]{Corollary}
\newtheorem{lemma}[theorem]{Lemma}
\newtheorem{proposition}[theorem]{Proposition}
\theoremstyle{remark}
\newtheorem{remark}[theorem]{Remark}
\theoremstyle{definition}
\newtheorem{definition}[theorem]{Definition}
\newcommand{\bp}{\begin{proof}}
\newcommand{\ep}{\end{proof}}
\newcommand{\thmref}[1]{Theorem~\ref{#1}}
\newcommand{\secref}[1]{Section~\ref{#1}}
\newcommand{\proref}[1]{Proposition~\ref{#1}}
\newcommand{\lemref}[1]{Lemma~\ref{#1}}
\newcommand{\corref}[1]{Corollary~\ref{#1}}
\newcommand{\un}{\mathds{1}}
\newcommand\aaa{\mathfrak{a}}
\newcommand{\aid}{\mathfrak{a}}
\newcommand{\bid}{\mathfrak{b}}
\newcommand{\pid}{\mathfrak{p}}
\newcommand\AAA{\mathcal{A}}
\newcommand{\C}{\mathbb C}
\newcommand{\N}{\mathbb N}
\newcommand{\Q}{\mathbb Q}
\newcommand{\R}{\mathbb R}
\newcommand{\T}{\mathbb T}
\newcommand{\Z}{\mathbb Z}
\newcommand\B{\mathcal{B}}
\newcommand{\hh}{\mathcal H}
\newcommand{\OO}{\mathcal O}
\newcommand{\oo}{\mathcal O}
\newcommand\ak{{\mathbb A}_K}
\newcommand\akf{{\mathbb A}_{K,f}}
\newcommand\aks{{\mathbb A}_K^*}
\newcommand\jkf{\akf^*}
\newcommand\ohs{{\hat{\OO}^*}}
\newcommand{\Pk}{P_K^+}
\newcommand{\pk}{P_K^+}
\newcommand{\Po}{P_{\OO}^+}
\newcommand{\po}{P_{\OO}^+}
\newcommand\gal{\mathcal G}
\newcommand\kab{K^{\mathrm{ab}}}
\newcommand\Mat{\operatorname{Mat}}
\newcommand{\inv}{^{-1}}
\newcommand{\kk}{K}
\newcommand\clkp{\mathrm{Cl}^+_K}
\newcommand{\kmo}{\kk/\OO}
\newcommand{\mko}{V_{K,f}}
\newcommand\os{\oo^*}
\newcommand\ox{\oo^\times}
\newcommand\kps{K_+^*}
\newcommand\kpsone{K_{+,1}^*}
\newcommand\op{\oo_+}
\newcommand\ops{\oo_+^*}
\newcommand\hatok{\hat{\oo}}
\newcommand\opx{\oo_+^\times}
\newcommand\opsbar{\overline{\oo_+^*}}
\newcommand{\heck}{C^*_r(\pk,\po)}
\newcommand{\lspa}{\operatorname{span}}
\newcommand{\matr}[2]{\left(\begin{matrix}1&#1 \\
0&#2\end{matrix}\right)}
\newcommand{\smatr}[2]{\bigl(\mbox{\tiny{$\begin{matrix}1&#1 \\
0&#2\end{matrix}$}}\bigr)}
\newcommand{\stabop}[1]{{\ops}_{\!#1}}
\newcommand\bpmatrix{\begin{pmatrix}}
\newcommand\epmatrix{\end{pmatrix}}
\begin{document}

\date{April 5, 2018; minor changes: December 9, 2018}

\title{Ground states of groupoid $C^*$-algebras, phase transitions and arithmetic subalgebras for Hecke algebras}

\author{Marcelo Laca}\address{Department of Mathematics and Statistics, University of
Victoria, P.O. Box 1700 STN CSC, Victoria, British Columbia, V8W 2Y2, Canada.}
\email{laca@uvic.ca}

\author{Nadia S. Larsen}\address{Department of Mathematics, University of Oslo,
P.O. Box 1053 Blindern, N-0316 Oslo, Norway.}
\email{nadiasl@math.uio.no}

\author{Sergey Neshveyev}\address{Department of Mathematics, University of Oslo,
P.O. Box 1053 Blindern, N-0316 Oslo, Norway.}
\email{sergeyn@math.uio.no}

\maketitle
\centerline{\em Dedicated to Alain Connes on the occasion of his 70th birthday}

\begin{abstract}
We consider the Hecke pair consisting of the group $\pk$ of affine transformations of a number field~$K$ that preserve the orientation in every real embedding and the subgroup $\po$ consisting of transformations with algebraic integer coefficients.
The associated Hecke algebra $\heck$ has a natural time evolution $\sigma$, and we describe the corresponding
phase transition for KMS$_\beta$-states and for ground states.
From work of Yalkinoglu and Neshveyev it is known that a Bost-Connes type
system associated to $K$ has an essentially unique arithmetic subalgebra. When we import this subalgebra through the
isomorphism of $\heck$ to a corner in the Bost-Connes system established by Laca, Neshveyev and Trifkovi\'c,
we obtain an arithmetic subalgebra of $\heck$ on which ground states exhibit the `fabulous' property with respect to an action of the Galois group $\gal(\kab/H_+(K))$, where $H_+(K)$ is the narrow Hilbert class field.

In order to characterize the ground states of the $C^*$-dynamical system $(\heck,\sigma)$, we obtain first a characterization of the ground states of a groupoid $C^*$-algebra, refining earlier work of Renault. This is independent from number theoretic considerations, and may be of interest by itself in other situations.
\end{abstract}

\section*{Introduction}
The seminal work of Bost and Connes \cite{bos-con}, that showed how a
Hecke $C^*$-algebra constructed from the inclusion of the ring of
integers in the field of rationals relates to class field theory for
the field $\Q$, inspired several generalizations to algebraic number
fields. One such generalization, for quadratic imaginary fields, by
Connes, Marcolli, and Ramachandran~\cite{CMR} eventually led to
the construction of Bost-Connes type systems for arbitrary number fields by Ha and
Paugam~\cite{ha-pa}, which exhibit the expected phase transition and
symmetries~\cite{LLN}. Later, Yalkinoglu \cite{Y} proved that these
Bost-Connes systems contained arithmetic models, which were
subsequently shown to be unique under some natural conditions by
Neshveyev, see the Appendix to \cite{Y}.

In the decades following the Bost-Connes paper there has been a
vast amount of work putting forward constructions of $C^*$-algebras
from number theory and unraveling relationships between operator
algebras and number theory, formalized in
classification of KMS-states, computation of $K$-theory, and in results
about recovery of number theoretical input from the operator algebra
side. Without attempting to be exhaustive, we recall some
developments.

One successful vein of investigation started with the work by
Cuntz~\cite{cun} on a $C^*$-algebra of the $ax+b$ semigroup over
$\N$, and continued with \cite{cun-li1, cun-li2, luck-li}.  The
paper \cite{LR} by Laca and Raeburn marked the first $C^*$-algebra
with a time evolution admitting an interesting  phase transition at
infinity for ground states, see also \cite{ln2}. Further results of
this type were achieved in \cite{CDL}.

It is natural to ask  what information passes between number theory
and operator algebras when one takes a number field as a starting
point and associates $C^*$-algebras to it. In \cite{li}, Li showed that the Toeplitz type algebra
associated to a number field~\cite{CDL} recovers the field up
to arithmetic equivalence given a specified number of roots of
unity. Motivated by a study of
Bost-Connes type systems, Cornelissen et al.~\cite{CdSLMS1} showed
that the number field can be recovered from the dual of the
abelianized Galois group and the collection of all associated
$L$-series. Recently, Takeishi used $K$-theoretical techniques inspired
by \cite{li} to prove that the $C^*$-algebra in the Bost-Connes
system associated to a number field $K$ recovers the Dedekind zeta-function of $K$, \cite{tak}, thus refining the findings of
\cite{LLN} that showed that the entire Bost-Connes system determines
the Dedekind zeta-function. Finally, the full reconstruction of
number fields up to isomorphism from the $C^*$-algebras of the
respective Bost-Connes systems has been recently achieved by Kubota
and Takeishi~\cite{KT}, see also~\cite{CdSLMS2}.

Notwithstanding these spectacular developments, one disadvantage of general
Bost-Connes systems is that, as opposed to the original Bost-Connes
system for $\Q$, they take class field theory as an input. By
contrast, systems based on Hecke algebras rely on simple arithmetic
considerations and therefore carry a lot of appeal. In this paper we
consider one such system already studied in~\cite{LNT} (see also~\cite{LvF,LvF2}), which is probably the most successful generalization of the original construction of Bost and Connes to general number fields.

Let $\kk$ be an algebraic number field with ring of integers $\oo$ and denote by $\kps$ and $\ops$ the respective multiplicative groups of totally positive elements, that is, elements that embed as positive numbers in every real embedding of $\kk$. We consider the $C^*$-dynamical system based on the Hecke $C^*$-algebra of the inclusion of affine groups
\[
\po:= \matr{\oo}{\ops} \ \subset \
 \matr{\kk}{\kps} =: \pk
\]
with respect to the natural dynamics determined by the modular function of the inclusion. The purpose of this paper is to give an explicit description of the phase transition of KMS-states and of ground states of this system and to explore the arithmetic subalgebra and `fabulous' property of ground states.
It has already been  observed in \cite{LNT} that this $C^*$-dynamical system exhibits a phase transition for finite inverse temperatures replicating that for Bost-Connes systems. Since the methods used in~\cite{LNT} depend in a crucial way on results for the Bost-Connes systems~\cite{LLN} and induction through $C^*$-correspondences~\cite{ln}, it is interesting to examine the phase transition, the symmetries and the arithmetic subalgebra explicitly at the level of Hecke systems.

The Hecke $C^*$-algebra $\heck$ has a realization as a groupoid $C^*$-algebra,
and in order to exploit this in the characterization of equilibrium states
we develop tools to characterize ground states and KMS$_\beta$-states
for dynamics determined by $1$-cocycles on certain groupoid $C^*$-algebras. These groupoids are often nonprincipal, and in  \secref{groupoids} we build upon the results of \cite{nes} to this effect. We believe our results will be of independent interest in other contexts, so we have presented them in a way that is independent from our original number theoretic motivation.
The parametrization of KMS-states from \cite{nes} is restated in \thmref{thm:KMS} and a similar characterization for ground states in terms of a \emph{boundary set} of a $1$-cocycle introduced by Renault~\cite{ren1} is obtained in \thmref{thm:ground}.
The specific results that are useful for our applications relate to groupoids obtained by reduction of a transformation group to a subset that is not invariant. In \thmref{boundaryandgroundforc}  we show that  ground states are parametrized by all states on the $C^*$-algebra of the \emph{boundary groupoid} and, under some extra assumptions, KMS$_\beta$-states correspond to the tracial states of the boundary groupoid, \thmref{KMSandgroundforY}. The boundary groupoid often has a noncommutative $C^*$-algebra, in which case there is a phase transition for ground states.

In \secref{KMShecke} we review the definition and basic properties of the Hecke $C^*$-algebra $\heck$, emphasizing its presentation in terms of distinguished double cosets, \proref{Heckepresentation}. This leads  to realizations as a semigroup crossed product and as a groupoid $C^*$-algebra, \proref{prop:iso}. The study of the boundary groupoid of the cocycle determined by the norm leads naturally to the consideration of integral ideals of minimal norm in their narrow class  (of fractional ideals modulo principal ideals generated by totally positive elements of $K$). This allows us to realize the boundary $C^*$-algebra explicitly as a subhomogeneous $C^*$-algebra in \corref{cor:boundaryalg}. We state and prove our explicit characterization of equilibrium states, \thmref{thm:gs-for-redHecke}, which is our main result in \secref{KMShecke}.

We begin \secref{arithmetic} by making explicit various symmetry actions on $\heck$. We also exhibit the isomorphism of $\heck$ to a corner in the Bost-Connes $C^*$-algebra $A_K$ from \cite{LNT} in an explicit form that  suits our present purposes. The main result is \thmref{thm:arithm} where we show that if we pull back the corresponding corner of the arithmetic subalgebra for the Bost-Connes system for $K$ obtained in \cite{Y} through this isomorphism, then we obtain an  arithmetic subalgebra of $\heck$ for which the ground states that are invariant under the dual coaction of $\kps$ have the expected `fabulous' property with respect to the natural action of $\gal(\kab/H_+(K))$, where $H_+(K)$ is the narrow Hilbert class field.

To summarize, we see now that the systems defined by the Hecke pairs $(\pk,\po)$ have almost as nice properties as the full Bost-Connes systems, including considerations with arithmetic models. An interesting new phenomenon is that the equilibrium states with good arithmetic properties are neither the KMS$_\infty$-states, nor the ground states, but a particular class of the latter ones.

\smallskip\noindent
{\bf Acknowledgement} %This paper addresses a question raised by Alain Connes about existence of arithmetic subalgebra for the orientation-preserving Hecke algebras from number fields during the workshop "Number theory and non-commutative geometry" at the Hausdorff Research Institute for Mathematics, Bonn, during 24-28 November 2014.
This research was carried out through visits of M.L. to the Department of Mathematics at the University of Oslo and of N.S.L. to the Department of Mathematics and Statistics at the University of Victoria. Both are grateful to their respective hosts for their hospitality.

%%%%%%%%%%%%%%%%%%%%%%%%%%%%%%%%%%%%%%%%%%%%%%%%
\section{Equilibrium states on groupoid algebras}\label{groupoids}

Let $G$ be a locally compact second countable (Hausdorff) groupoid. The unit space of $G$ will be denoted by $G^{(0)}$ and the range and source maps by $r: G \to G^{(0)}$ and $s: G \to G^{(0)}$, respectively. We will assume that $G$ is  \'{e}tale, that is, $r$ and $s$ are local homeomorphisms, which among other things implies that $G^{(0)}$ is open in $G$. For $x\in G^{(0)}$ put
$$
G^x=r^{-1}(x),\quad G_x=s^{-1}(x),\quad \hbox{and}\ \ G^x_x=G^x\cap G_x.
$$
Notice that  $G^x_x$ is a group for each $x\in G^{(0)}$, called the isotropy subgroup at $x$.
The space $C_c(G)$ of continuous functions on $G$ with compact support is a $*$-algebra under the convolution product
$$
(f_1*f_2)(g)=\sum_{h\in G^{r(g)}}f_1(h)f_2(h^{-1}g)
$$
and the involution $f^*(g)=\overline{f(g^{-1})}$. Its $C^*$-enveloping algebra is the full groupoid $C^*$-algebra $C^*(G)$.

Suppose $c\colon G\to\nolinebreak\R$ is a continuous $1$-cocycle on $G$; by definition, this means that $c$ is a continuous groupoid homomorphism of $G$ to $(\R,+)$. Then there is a unique continuous one-parameter group $\{\sigma^c_t: t\in \R\}$ of automorphisms of $C^*(G)$ such that
\begin{equation}\label{cocycledynamics}
\sigma^c_t(f)(g)=e^{it c(g)}f(g)\ \ \text{for}\ \ f\in C_c(G)\ \text{and}\ g\in G.
\end{equation}
%\subsection{KMS states}

Recall that a $C^*$-algebra $A$ equipped with a continuous one-parameter group of automorphisms $\{\sigma_t\}_{t\in \R}$ is referred to as a $C^*$-dynamical system $(A,\sigma)$. As these
are the models for quantum statistical mechanical time evolutions, their equilibrium states, or KMS-states, are of special interest. We refer the reader to \cite{bra-rob2} for a comprehensive treatment of the theory of KMS-states, and restrict ourselves to stating their definition in order to set the notation.

For each value of an inverse temperature parameter $\beta\in \R$, the  $\sigma$-KMS$_\beta$-states of $A$, alternatively, the KMS$_\beta$-states of $(A,\sigma)$, are the states $\phi$ of $A$ such that
\[
\phi(ab) = \phi(b \sigma_{i\beta}(a)),
\]
for all $b\in A$ and all analytic elements $a$ in $A$, which are those such that the map $\R\ni t \mapsto \sigma_t(a)$ extends to an entire function on $\C$. There are also related notions of KMS$_\infty$-states and ground states
that will be addressed shortly.

The KMS$_\beta$-states of $(C^*(G), \sigma^c)$ for finite $\beta$
are characterized, in terms of measures on the unit space
and traces on the $C^*$-algebras of the isotropy subgroups,
by the following theorem, which refines
an earlier result of Renault~\cite[Proposition~II.5.4]{ren1}.

\begin{theorem}[{\cite[Theorem~1.3]{nes}}] \label{thm:KMS}
Suppose that  $c$ is a continuous $\R$-valued $1$-cocycle on a locally compact second countable \'{e}tale groupoid $G$. Let $\sigma^c$ be the corresponding dynamics on~$C^*(G)$. Then, for every $\beta\in \R$, there exists a one-to-one correspondence between the $\sigma^c$-KMS$_\beta$-states on~$C^*(G)$ and the pairs $(\mu,\{\varphi_x\}_{x\in G^{(0)}})$ consisting of a probability measure $\mu$ on $G^{(0)}$ and a $\mu$-measurable field of states~$\varphi_x$ on $C^*(G^x_x)$ such that:
\begin{enumerate}
\item[(i)] $\mu$ is quasi-invariant with Radon--Nikodym cocycle $e^{-\beta c}$;
\item[(ii)] $\varphi_x(u_g)=\varphi_{r(h)}(u_{hgh^{-1}})$ for $\mu$-a.e.~$x$ and all $g\in G^x_x$ and $h\in G_x$; in particular, $\varphi_x$ is tracial for $\mu$-a.e.~$x$.
%\item[(iii)] $\varphi_x(u_g)=0$ for $\mu$-a.e.~$x$ and all $g\in G^x_x\setminus c^{-1}(0)$.
\end{enumerate}
Namely, the state corresponding to $(\mu,\{\varphi_x\}_x)$ is given~by
$$
\varphi(f)=\int_{G^{(0)}}\sum_{g\in G^x_x}f(g)\varphi_x(u_g)d\mu(x)\ \ \hbox{for}\ \ f\in C_c(G).
$$
\end{theorem}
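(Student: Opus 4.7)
The plan is to follow the strategy established by Renault in~\cite{ren1} and refined in~\cite{nes}: starting from a KMS$_\beta$-state $\varphi$, extract a quasi-invariant probability measure $\mu$ on $G^{(0)}$ together with a measurable field of states $\{\varphi_x\}$ on the isotropy $C^*$-algebras $C^*(G^x_x)$; conversely, assemble such a pair into a state via the given integral formula and check the axioms directly on $C_c(G)$.

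For the forward direction, first restrict $\varphi$ to the subalgebra $C_0(G^{(0)}) \subseteq C^*(G)$ to obtain a probability measure $\mu$ on $G^{(0)}$ by Riesz representation. To show that $\mu$ has Radon--Nikodym cocycle $e^{-\beta c}$, test the KMS relation against $f \in C_c(U)$ for an open bisection~$U$: expanding $\varphi(f^* f) = \varphi(f \, \sigma^c_{i\beta}(f^*))$ by the convolution formula, and using that $f^* f$ and $f f^*$ are both supported in~$G^{(0)}$, yields a pointwise identity relating the push-forwards of~$\mu$ along $r|_U$ and $s|_U$, twisted by $e^{-\beta c}$. Letting $U$ vary over a basis of open bisections and using the étale structure produces quasi-invariance in the usual sense.

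Next, show that $\varphi$ is concentrated on the isotropy bundle $G' = \bigsqcup_{x} G^x_x$, in the sense that $\varphi(f) = 0$ whenever $f \in C_c(G)$ is supported on a bisection disjoint from $G'$; this follows from a Cauchy--Schwarz estimate combined with the KMS identity and the quasi-invariance already established. Since $G'$ is a group bundle over $G^{(0)}$, disintegrate the functional $f \mapsto \varphi(f)$, for $f \in C_c(G')$, along the projection $G' \to G^{(0)}$ against $\mu$, obtaining a $\mu$-measurable field of states $\varphi_x$ on $C^*(G^x_x)$ satisfying the stated integral formula. The equivariance in (ii) then emerges by applying the KMS identity to a pair $(f_1, f_2)$ where $f_1$ is supported on a bisection meeting $G^x_x$ at $g$ and $f_2$ is supported on a bisection carrying $x$ to $r(h)$ along $h$: the $e^{-\beta c}$-twist produced by $\sigma^c_{i\beta}$ cancels the Radon--Nikodym factor in the change of base-point, leaving the conjugation relation $\varphi_x(u_g) = \varphi_{r(h)}(u_{h g h^{-1}})$; specializing $h \in G^x_x$ gives the trace property.

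For the converse, define $\varphi$ on $C_c(G)$ by the integral formula, check positivity using that each $\varphi_x$ is a state (which gives a positive functional on $C_c(G'|_x)$ fiberwise), extend by continuity to $C^*(G)$, and verify the KMS$_\beta$-identity on the dense $*$-subalgebra $C_c(G)$ using Fubini together with the cocycle identity for $e^{-\beta c}$ relative to $\mu$, with condition (ii) absorbing the non-abelian isotropy contribution. The main obstacle is the measurable disintegration together with the simultaneous verification of (ii) on a full-$\mu$-measure subset of $G^{(0)}$: since $G$ is only assumed second countable and not principal, the isotropy bundle need not be locally trivial, and one must select a countable dense collection of continuous bisections through which both measurability of $x \mapsto \varphi_x(u_g)$ and the equivariance relation can be tested, then consolidate the resulting countable family of null sets. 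This is the technical heart of~\cite[Theorem~1.3]{nes}, and will be invoked verbatim rather than redone here.
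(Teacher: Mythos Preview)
The paper does not prove this theorem at all; it is quoted verbatim from \cite[Theorem~1.3]{nes} and used as a black box. Your proposal is an outline of the argument from \cite{nes} and, in its final paragraph, explicitly defers the technical disintegration step to that reference, so in effect you and the paper are doing the same thing: invoking \cite{nes}.

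One remark on the sketch itself: the step where you claim that $\varphi$ vanishes on any $f\in C_c(G)$ supported on a bisection disjoint from the isotropy bundle $G'$ ``by a Cauchy--Schwarz estimate combined with the KMS identity'' is glib. Cauchy--Schwarz only bounds $|\varphi(f)|^2$ by $\varphi(f^*f)$, which lives on $G^{(0)}$ and is typically nonzero. The actual mechanism in \cite{nes} goes through the GNS representation and Renault's disintegration theorem (much as the present paper does for ground states in Theorem~\ref{thm:ground}): one disintegrates the GNS space over $G^{(0)}$ and reads off both the quasi-invariance of $\mu$ and the fiberwise states $\varphi_x$ from the cyclic vector, rather than first proving concentration on $G'$ and then disintegrating. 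Since you ultimately cite \cite{nes} for this anyway, the distinction does not affect correctness, but the heuristic you give for that particular step is not the one that actually works.
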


\begin{remark}
KMS$_\beta$-states for $\beta\ne0$ are automatically $\sigma$-invariant. In the setting of the above theorem this implies that
\begin{equation}\label{eq:KMSinv}
\varphi_x(u_g)=0\ \ \text{for}\ \ \mu\text{-a.e.}\ \ x\in G^{(0)}\ \ \text{and all}\ \ g\in G^x_x\setminus c^{-1}(0).
\end{equation}
For $\beta=0$, $\sigma$-invariance is sometimes included in the definition of KMS$_0$-states. In this case \eqref{eq:KMSinv} should be added to conditions (i) and (ii) of the theorem, see~\cite{nes}.
\end{remark}

We aim to obtain next an analogous description of the set of ground states.
For this purpose it will be most convenient for us to use the characterization
of ground states given in \cite[Proposition~5.3.19~(4)]{bra-rob2}.
It states that given a continuous one-parameter group $\sigma$ of automorphisms of  a $C^*$-algebra~$A$, a state $\varphi$ of $A$ is a $\sigma$-ground state iff for every smooth compactly supported function~$F$ on $\R$ with support in $(-\infty,0)$ we have
$$
\varphi(\sigma_{\check F}(a)^*\sigma_{\check F}(a))=0\ \ \text{for all}\ \ a\in A,
$$
where $\check F$ is the inverse Fourier transform of $F$, so
$$
\check F(x)=\frac{1}{2\pi}\int_\R F(y)e^{-ixy}dy,
$$
and
$$
\sigma_{\check F}(a)=\int_\R \check F(t)\sigma_t(a)dt.
$$
The main advantage of this formulation is that when $A = C^*(G)$, with the dynamics $\sigma^c$
determined as above by a continuous $1$-cocycle $c$ on $G$, the operators $\sigma_{\check F}$
have a very simple form:
\begin{equation}\label{eq:sigmaF}
\sigma^c_{\check F}(f)(g)=F(c(g))f(g)\ \ \text{for}\ \ f\in C_c(G)\ \text{and}\ g\in G.
\end{equation}

Following Renault~\cite{ren1} we consider the subset
$$
Z =\{x\in G^{(0)}\mid c\le0\ \text{on}\ G^x\}=\{x\in G^{(0)}\mid c\ge0\ \text{on}\ G_x\}\subset G^{(0)},
$$
which we call the \emph{boundary set} of the cocycle $c$. (This is denoted by $\operatorname{Min}(c)$ in~\cite{ren1}, but we shall eschew this notation here.)
% but this notation does not seem to be very suggestive and we are not going to use it.
The boundary set can very well be empty, but when it is nonempty, we may consider the corresponding reduction of $G$:
$$
G_Z=\{g\in G\mid  r(g)\in Z,\ s(g)\in Z\},
$$
which we call the \emph{boundary groupoid} of the cocycle $c$. In general, $G_Z$ may fail to be \'{e}tale, so it may not have a good $C^*$-algebra associated with it. However, the kernels $c^{-1}(0)$ of many naturally arising cocycles are  \'{e}tale groupoids and we have the following result.

\begin{lemma}\label{kernellemma}
The set $Z$ is invariant under $c^{-1}(0)$ and the boundary groupoid~$G_Z$ coincides with the reduction of $c^{-1}(0)$ by $Z$. As a consequence, if the kernel groupoid $c^{-1}(0)$ is \'{e}tale, then so is the boundary groupoid $G_Z$.
\end{lemma}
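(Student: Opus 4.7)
I would address the three assertions in sequence.

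The plan for invariance of $Z$ under $c^{-1}(0)$ is to pick $g \in c^{-1}(0)$ with $s(g) \in Z$ and verify that $y = r(g)$ satisfies $c \le 0$ on all of $G^y$. I would take an arbitrary $h \in G^y$ and form the product $g^{-1}h$, which is composable since $s(g^{-1}) = r(g) = r(h)$ and lies in $G^{s(g)}$. The cocycle identity together with $c(g) = 0$ gives $c(g^{-1}h) = c(h)$, so the hypothesis $s(g) \in Z$ forces $c(h) \le 0$, as required.

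For the identification $G_Z = c^{-1}(0)_Z$, the inclusion $\supset$ is immediate; for the reverse I would use a pincer: if $r(g), s(g) \in Z$, then viewing $g$ as an element of $G_{s(g)}$ forces $c(g) \ge 0$, while viewing it as an element of $G^{r(g)}$ forces $c(g) \le 0$, whence $c(g) = 0$.

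For étaleness, assume $c^{-1}(0)$ is étale, and verify that $r \colon G_Z \to Z$ is a local homeomorphism, with the subspace topologies coming from $G$ and $G^{(0)}$. Given $g \in G_Z$, I would start with an open bisection $U$ of $c^{-1}(0)$ containing $g$ such that $r|_U$ is a homeomorphism onto an open subset $r(U) \subset G^{(0)}$, and set $V = U \cap G_Z$. Applying the first step to both $h \in U$ and $h^{-1}$ shows that on $c^{-1}(0)$ the two conditions $r(h) \in Z$ and $s(h) \in Z$ are equivalent; hence $V = U \cap r^{-1}(Z)$, so $r(V) = r(U) \cap Z$ is open in $Z$ and $r|_V$ is a homeomorphism. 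The source map is handled symmetrically.

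The main obstacle, or rather the step where invariance earns its keep, is the last one: reductions of étale groupoids by arbitrary subsets of the unit space need not be étale, because $r(U \cap r^{-1}(Z) \cap s^{-1}(Z))$ need not be open in $Z$. It is precisely the invariance of $Z$ under $c^{-1}(0)$ established in the first step that collapses this double constraint to a single one and thereby rescues étaleness.
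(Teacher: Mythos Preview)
Your proof is correct and follows the same line as the paper's. The paper defers the first two assertions to \cite[Proposition~I.3.16]{ren1}, whereas you supply the short direct arguments; for \'etaleness, both you and the paper use the same observation that on an open bisection $U$ of $c^{-1}(0)$ the invariance of $Z$ makes $r(h)\in Z$ equivalent to $s(h)\in Z$, so $U\cap r^{-1}(Z)$ already lies in $G_Z$ and is carried homeomorphically onto $r(U)\cap Z$.
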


\begin{proof}
The first part of the lemma is contained in~\cite[Proposition~I.3.16]{ren1}. For the second part we just need to observe that the property of being \'{e}tale is inherited by reduction to invariant subsets. Indeed, if $W\subset c^{-1}(0)$ is open and $r$ defines a homeomorphism of $W$ onto~$r(W)$, then $W\cap r^{-1}(Z)$ is contained in $G_Z$ and $r$ defines a homeomorphism of $W\cap r^{-1}(Z)$ onto $r(W)\cap Z$. Hence $G_Z$ is \'{e}tale provided $c^{-1}(0)$ is \'{e}tale.
\end{proof}

The following characterization of ground states is a refinement of
\cite[Proposition~II.5.4]{ren1}.

\begin{theorem}\label{thm:ground}
Suppose $c$ is a continuous $\R$-valued $1$-cocycle on a locally compact second countable \'{e}tale groupoid $G$.
Let $\sigma^c$ be the associated dynamics on $C^*(G)$ as in \eqref{cocycledynamics} and let $Z\subset G^{(0)}$ be the boundary set of $c$.  If $Z\neq \emptyset$ and the boundary groupoid $G_Z$ is \'{e}tale, then there is an affine isomorphism  of the state space of $C^*(G_Z)$ onto the $\sigma^c$-ground state space of $C^*(G)$ that maps the state $\psi$ of $C^*(G_Z)$ to the unique state $\varphi_\psi$ of $C^*(G)$
defined by
\begin{equation}\label{psimapstophi}
\varphi_\psi(f)=\psi(f|_{G_Z})\ \ \text{for}\ \ f\in C_c(G).
\end{equation}
If $Z=\emptyset$, then there are no $\sigma^c$-ground states on $C^*(G)$.
\end{theorem}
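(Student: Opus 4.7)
The plan is to invoke the Bratteli--Robinson characterization of ground states recalled just before the theorem. Combined with~\eqref{eq:sigmaF}, it reduces the ground state condition to requiring $\varphi(f^*\!*\!f)=0$ for every $f\in C_c(G)$ with $\operatorname{supp} f\subset c^{-1}((-\infty,0))$, since $\sigma^c_{\check F}$ acts on such an $f$ as multiplication by $F\circ c$, making the closure of $\{\sigma^c_{\check F}(a):\operatorname{supp} F\subset(-\infty,0),\,a\in C^*(G)\}$ in $C^*(G)$ coincide with the closure of those functions.

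When $Z=\emptyset$, every $x\in G^{(0)}$ lies in some $s(U)$ with $U$ an open bisection on which $c<0$, and second countability allows us to cover $G^{(0)}$ by countably many such sets. The probability measure $\mu=\varphi|_{C_0(G^{(0)})}$ of any state $\varphi$ then assigns positive mass to one of them; a nonzero $f\in C_c(U)$ yields $\varphi(f^*\!*\!f)=\int_{s(U)}|f(u_x)|^2\,d\mu(x)>0$, where $u_x\in U$ is the unique element with $s(u_x)=x$, contradicting the criterion.

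For $Z\neq\emptyset$, the central observation is the following dichotomy: any $k\in G$ with $s(k)\in Z$ satisfies $c(k)\geq 0$, and $c(k)=0$ forces $r(k)\in Z$, so $k\in G_Z$. Both assertions follow from the cocycle identity; the first is direct from $k\in G_{s(k)}$, and the second is obtained by noting that for any $h\in G^{r(k)}$ the composition $h^{-1}k$ lies in $G_{s(k)}$, whence $0\leq c(h^{-1}k)=-c(h)$. Letting $B:=s^{-1}(Z)\cap c^{-1}((0,\infty))$, every $k$ with $s(k)\in Z$ lies in $G_Z\sqcup B$, and the right action of $G_Z$ on $B$ by partial multiplication, $k\cdot g:=kg$ when $s(k)=r(g)$, is free and proper in the étale sense. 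This makes $C_c(B)$ a pre-Hilbert right $C^*(G_Z)$-module with inner product $\langle\xi,\eta\rangle(g)=\sum_{k\in B,\,s(k)=r(g)}\overline{\xi(k)}\,\eta(kg)$ for $g\in G_Z$, and splitting the sum in $(f^*\!*\!f)|_{G_Z}(g)=\sum_{k,\,s(k)=r(g)}\overline{f(k)}\,f(kg)$ along the dichotomy yields
\begin{equation*}
(f^*\!*\!f)|_{G_Z}\;=\;(f|_{G_Z})^*\!*_{G_Z}\!(f|_{G_Z})\;+\;\langle f|_B,\,f|_B\rangle,
\end{equation*}
exhibiting the left-hand side as a sum of two positive elements of $C^*(G_Z)$.

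This identity drives both directions. For sufficiency, given a state $\psi$ on $C^*(G_Z)$, it yields $\varphi_\psi(f^*\!*\!f)=\psi((f^*\!*\!f)|_{G_Z})\geq 0$, and normalization $\|\varphi_\psi\|=1$ follows by testing on an approximate identity of $C_0(G^{(0)})$; the ground state property then holds because for $\operatorname{supp} f\subset c^{-1}((-\infty,0))$ the sum defining $(f^*\!*\!f)|_{G_Z}$ is empty (the dichotomy forbids $s(k)\in Z$ with $c(k)<0$), so $\varphi_\psi(f^*\!*\!f)=0$. For necessity, given a ground state $\varphi$, the $Z=\emptyset$ argument applied to bisections inside $c^{-1}((-\infty,0))$ shows that $\mu$ is supported on $Z$; Cauchy--Schwarz applied to $f\!*\!f^*$ or $f^*\!*\!f$, which live on $r(U)$ or $s(U)$ as functions on $G^{(0)}$ when $f$ is supported on a bisection $U$, gives $\varphi(f)=0$ whenever $U\cap G_Z=\emptyset$; a partition of unity combined with the bound $\|f\|_{C^*(G)}\leq C_K\|f\|_\infty$ for $f$ supported in a fixed compact $K$ extends this to $\varphi(f)=0$ whenever $f|_{G_Z}\equiv 0$. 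Consequently $\psi(h):=\varphi(\tilde h)$ for any extension $\tilde h\in C_c(G)$ of $h\in C_c(G_Z)$ is well-defined, and extends to a state on $C^*(G_Z)$ whose associated $\varphi_\psi$ equals $\varphi$. The main obstacle is the positivity step in the sufficiency direction: one needs to verify that the right $G_Z$-action on $B$ is free and proper in the precise sense required for the standard Hilbert $C^*(G_Z)$-module construction on $C_c(B)$ to go through and produce a positive inner product.
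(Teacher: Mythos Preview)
Your approach is essentially correct and genuinely different from the paper's. For necessity, the paper invokes Renault's disintegration theorem to decompose the GNS representation and shows that the cyclic vector field vanishes off $Z$; you instead work directly with the restricted measure $\mu=\varphi|_{C_0(G^{(0)})}$ and bisections, which is more elementary and avoids disintegration entirely. For sufficiency, the paper builds the $C^*(G)$-$C^*(G_Z)$-correspondence on $C_c(Y)$ with $Y=s^{-1}(Z)$, induces the GNS representation of $\psi$, and checks the ground-state property via the unitaries implementing $\sigma^c$; you instead verify positivity of $\varphi_\psi$ directly through your key identity. Both routes ultimately rest on the same Hilbert-module positivity, but yours packages it as an algebraic identity rather than a representation-theoretic construction.

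One point deserves correction. You flag the free/proper verification for the $G_Z$-action on $B$ as the main obstacle, but that part is routine: freeness is immediate from groupoid cancellation, and properness follows because $K_1^{-1}K_2\cap G_Z$ is compact for compact $K_1,K_2\subset B$. The actual wrinkle is that $B=s^{-1}(Z)\cap c^{-1}((0,\infty))$ need not be closed in $G$ (only locally closed), so $f|_B$ is generally \emph{not} in $C_c(B)$ and your inner product $\langle f|_B,f|_B\rangle$ is not a priori covered by the Hilbert-module construction on $C_c(B)$. The clean fix is to drop the splitting into $G_Z$ and $B$ altogether and observe directly that
\[
(f^*\!*\!f)|_{G_Z}=\langle f|_Y,\,f|_Y\rangle_Y,
\]
where the right side is the inner product on the pre-Hilbert $C^*(G_Z)$-module $C_c(Y)$ (exactly the correspondence $\mathcal H$ the paper constructs). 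Since $Y$ is closed in $G$, one has $f|_Y\in C_c(Y)$ automatically, and positivity follows. With this adjustment your argument goes through; the decomposition into two positive pieces is an unnecessary detour.
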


\bp
Assume $\varphi$ is a $\sigma^c$-ground state. Consider the corresponding GNS-triple $(H,\pi,\xi)$. By Renault's disintegration theorem~\cite[Theorem~II.1.21]{ren1} the representation~$\pi$~is the integrated form of a representation of $G$ on a measurable field of Hilbert spaces $H_x$, $x\in G^{(0)}$, with respect to a measure class~$[\nu]$ on~$G^{(0)}$. Identifying $H$ with~$\int^\oplus_{G^{(0)}}H_x\,d\nu(x)$, consider the vector field~$(\xi_x)_x$ defining~$\xi$. Then, for every $f\in C_c(G)$, we have
\begin{equation} \label{eq:group-rep}
(\pi(f)\xi)_x=\sum_{g\in G^x}D(g)^{-1/2}f(g)g\xi_{s(g)},
\end{equation}
where $D$ is the Radon--Nikodym cocycle defined by the quasi-invariant measure $\nu$.

We claim that $\xi_x=0$ for $\nu$-a.e.~$x\in G^{(0)}\setminus Z$. Take a point $x_0\in G^{(0)}\setminus Z$. There exists $g_0\in G_{x_0}$ such that $c(g_0)<0$. Choose an open neighborhood $W$ of $g_0$ such that $r|_W\colon W\to r(W)$ and $s|_W\colon W\to s(W)$ are homeomorphisms and $c(W)\subset(c(g_0)-\delta,c(g_0)+\delta)$ for some $\delta\in(0,-c(g_0))$. Take any smooth compactly supported function $F$ on $\R$ such that $\operatorname{supp}F\subset(-\infty,0)$ and $F\equiv1$ on $[c(g_0)-\delta,c(g_0)+\delta]$. Then, for any function $f\in C_c(G)$ with support in $W$, it follows from~\eqref{eq:group-rep} that
$$
(\pi(f)\xi)_x=\begin{cases}D(r^{-1}(x))^{-1/2}f(r^{-1}(x))r^{-1}(x)\xi_{s(r^{-1}(x))},&\text{if}\ \ x\in r(W),\\
0,&\text{if}\ \ x\not\in r(W),\end{cases}
$$
where $r^{-1}$ is the inverse of $r|_W$. Hence, since $\sigma_{\check F}(f)=f$ by~\eqref{eq:sigmaF} and our choice of $F$, we get
\begin{align*}
\|\pi(\sigma_{\check F}(f))\xi\|^2&=\int_{r(W)}D(r^{-1}(x))^{-1}|f(r^{-1}(x))|^2\|\xi_{s(r^{-1}(x))}\|^2d\nu(x)\\
&=\int_{s(W)}|f(s^{-1}(x))|^2\|\xi_x\|^2d\nu(x),
\end{align*}
where $s^{-1}$ is the inverse of $s|_W$. The assumption that $\varphi$ is a ground state implies that the above integral is zero. Since this is true for any $f\in C_c(G)$ supported in $W$, it follows that $\xi_x=0$ for $\nu$-a.e.~$x$ in the neighborhood $s(W)$ of $x_0$. As $x_0\in G^{(0)}\setminus Z$ was arbitrary, we conclude that $\xi_x=0$ for $\nu$-a.e.~$x\in G^{(0)}\setminus Z$. This proves our claim.

As $\xi$ is a unit vector, we see in particular that if there exists a $\sigma^c$-ground state, then $Z\ne\emptyset$.
Next, consider the representation of $G_Z$ on the field $(H_x)_{x\in Z}$ of Hilbert spaces. Integrating it with respect to the measure~$\nu|_Z$ we get a representation $\pi_Z$ of $C^*(G_Z)$. The underlying space $H_Z=\int^\oplus_Z H_xd\nu(x)$ of this representation is a subspace of $H$. By the claim that we proved we have $\xi\in H_Z$. The vector~$\xi$ defines a state~$\psi$ on~$C^*(G_Z)$. As $\varphi(f)=(\pi(f)\xi,\xi)=\int_Z((\pi(f)\xi)_x,\xi_x)d\nu(x)$, it is clear from~\eqref{eq:group-rep} that $\varphi(f)=\psi(f|_{G_Z})$ for all $f\in C_c(G)$. Note that $\psi$ is the only state on~$C^*(G_Z)$ for which this identity can hold, since any function $f\in C_c(G_Z)$ extends to a compactly supported continuous function on $G$.

\smallskip

Conversely, assume $Z\ne\emptyset$ and take a state $\psi$ on $C^*(G_Z)$. We want to show that the linear functional $\varphi_\psi$ on $C_c(G)$ defined by $\varphi_\psi(f)=\psi(f|_{G_Z})$ extends (necessarily uniquely) to a $\sigma^c$-ground state on $C^*(G)$. For this we induce the GNS-representation of $C^*(G_Z)$ to a representation of $C^*(G)$.

Induced representations of groupoid algebras have been studied by a number of authors, see~\cite{ren3} for an overview. The construction essentially goes back to~\cite{mrw}. Consider the space $Y=s^{-1}(Z)\subset G$. At the level of groupoids the induction is done through the commuting actions $G\curvearrowright Y\curvearrowleft G_Z$ arising from the actions of $G$ on itself by left and right translations. These actions define a $C^*(G)$-$C^*(G_Z)$-correspondence $\hh$, see~\cite{ren3} and references therein. Namely, the correspondence $\hh$ is the completion of $C_c(Y)$ with respect to the $C^*(G_Z)$-valued inner product
$$
\langle\zeta,\eta\rangle(g)=\sum_{y\in G_{r(g)}}\overline{\zeta(y)}\eta(yg).
$$
The left and right actions of $C^*(G)$ and $C^*(G_Z)$ are given by the usual convolution operators:
$$
(f\zeta)(y)=\sum_{h\in G^{r(y)}}f(h)\zeta(h^{-1}y)\ \ \text{for}\ \ f\in C_c(G),\ \
(\zeta f_Z)(y)=\sum_{g\in G^{s(y)}_Z}\zeta(yg)f_Z(g^{-1})\ \ \text{for}\ \ f_Z\in C_c(G_Z).
$$

For every $t\in\R$ we define an operator $U_t$ on $C_c(Y)$ by $(U_t\zeta)(h)=e^{it
c(h)}\zeta(h)$. Since the cocycle~$c$ is zero on $G_Z$, these operators preserve the inner product and commute with the right action of $C_c(G_Z)\subset C^*(G_Z)$. Hence they extend to a one-parameter group of unitary operators on the right
$C^*$-Hilbert $C^*(G_Z)$-module $\mathcal H$. It is also clear from the definition that they implement the dynamics~$\sigma^c$:
$$
U_ta\xi=\sigma^c_t(a)U_t\xi\ \ \text{for}\ \ a\in C^*(G)\ \ \text{and}\ \ \xi\in\hh.
$$

Consider the GNS-triple $(H_\psi,\pi_\psi,\xi_\psi)$ defined by the state $\psi$ on $C^*(G_Z)$. Then, using the $C^*$-correspon\-dence~$\hh$, we can induce the representation $\pi_\psi$ to a representation $\pi$ of $C^*(G)$ on $H=\hh\otimes_{C^*(G_Z)}H_\psi$.

In order to define $\varphi_\psi$ assume first that $Z\subset G^{(0)}$ is compact, then its characteristic function
$\un_Z$ is in $C_c(Y)$, where we regard $Z$ a subset of $Y$; note that $Z$ is open $Y$, since $Z=Y\cap G^{(0)}$. In this case
$\xi=\un_Z\otimes\xi_\psi\in H$ is a unit vector and for each $f\in C_c(G)$, we have $\pi(f)\xi=f|_Y\otimes\xi_\psi$.
Hence
$$
(\pi(f)\xi,\xi)=\psi(\langle \un_Z,f|_Y\rangle)=\psi(f|_{G_Z}),
$$
so $\xi$ defines the required state $\varphi_\psi$.

It remains to check that $\varphi_\psi$ is a $\sigma^c$-ground state. The dynamics~$\sigma^c$ is implemented by the unitaries $W_t=U_t\otimes 1$ on $H$. Since $W_t\xi=\xi$, it follows that in order to check that~$\varphi_\psi$ is a $\sigma^c$-ground state we have to show that the operators
$$
W_{\check F}=\int_\R\check F(t)W_tdt
$$
are zero on the vectors $\pi(a)\xi$, $a\in C^*(G)$, for all smooth compactly supported functions $F$ with support in $(-\infty,0)$. But this is clear, since for any $f\in C_c(G)$ we have, similarly to~\eqref{eq:sigmaF}, that
$$
W_{\check F}\pi(f)\xi=(F\circ c)|_Yf|_Y\otimes\xi_\psi,
$$
which is zero as $c\ge0$ on $Y$. This finishes the proof in the case when $Z$ is compact.

When $Z$ is only locally compact, for every $\rho\in  C_c(Z)$, $0\le\rho\le1$, consider the vector $\xi_\rho=\rho\otimes\xi_\psi\in H$ of norm $\le1$. It defines a positive linear functional $\theta_\rho$ of norm $\le1$ on $C^*(G)$. This functional is $\sigma^c$-ground by the same reasoning as above. We have
$$
\theta_\rho(f)=\psi((\rho\circ r)|_{G_Z}f|_{G_Z}(\rho\circ s)|_{G_Z})\ \ \text{for}\ \ f\in C_c(G).
$$
It follows that the required state $\varphi_\psi$ is the limit of the net $(\theta_\rho)_\rho$ in the weak$^*$ topology. It is $\sigma^c$-ground since every functional $\theta_\rho$ is.
\ep

\begin{remark} We would like to draw attention at this point to a minor but potentially misleading omission in \cite{nes}.
The formula analogous to \eqref{eq:group-rep} used in the proof of \cite[Theorem~1.1]{nes} should have included the factor $D(g)^{-1/2}$. This omission has essentially no consequences for the proof of \cite[Theorem~1.1]{nes} and that result holds as stated; a corrected proof can be found in \href{https://arxiv.org/abs/1106.5912v3}{arXiv:1106.5912v3}.
\end{remark}

\begin{remark}
The proof of the theorem implies that there is a unique completely positive contraction $P\colon C^*(G)\to C^*(G_Z)$ such that $P(f)=f|_{G_Z}$ for $f\in C_c(G)$. Indeed, using the $C^*(G)$-$C^*(G_Z)$-correspondence $\mathcal H$ defined in the proof, this map is given by
$$
P(a)=\langle \un_Z,a\un_Z\rangle,\ \ a\in C^*(G),
$$
when $Z$ is compact, and it is given by the pointwise norm limit of the maps $a\mapsto \langle \rho,a\rho\rangle$, where $\rho\in C_c(Z)$, $0\le\rho\le1$, when $Z$ is only locally compact.
\end{remark}

\begin{remark}
For graph $C^*$-algebras, or more precisely, for their reductions by the projections corresponding to vertices, the above theorem recovers a result of Thomsen~\cite[Corollary~5.4]{thom}. Such a reduction $1_vC^*(E)1_v$ is defined by a groupoid $E$ with the unit space consisting of paths in a graph~$\mathcal G$ starting at a vertex $v$. Thomsen considers a cocycle defined by a function $F$ on the edges and describes the ground states of $1_vC^*(E)1_v$ in terms of so called $F$-geodesics. It is not difficult to see that a path starting at $v$ is an $F$-geodesic if and only if it belongs to the boundary set of the restricted cocycle. (As a side remark, if we considered the entire groupoid defining $C^*(E)$, then the boundary set of the corresponding cocycle would only be a subset of the $F$-geodesics.)
\end{remark}

Recall that, given a $C^*$-dynamical system $(A,\sigma)$, a state on $A$ is called a $\sigma$-KMS$_\infty$-state if it is the limit in the weak$^*$ topology of a net of states $(\varphi_i)_i$ such that each $\varphi_i$ is $\sigma$-KMS$_{\beta_i}$ and $\beta_i\to+\infty$, cf.~\cite{con-mar}. Any KMS$_\infty$-state is a ground state by \cite[Proposition~5.3.23]{bra-rob2}, but the converse does not necessarily hold. This is clear from the drastic example of $\mathcal O_\infty$ on which the gauge action has a ground state and no KMS$_\beta$-states for any $\beta\in\R\cup\{\infty\}$,  but also, more subtly, from recent examples that have an abundance of KMS$_\beta$-states for large $\beta$, \cite{LR,CDL}.  The following corollary of Theorem~\ref{thm:ground} makes this phenomenon particularly transparent.

\begin{corollary}\label{cor:KMSinfty}
In the setting of Theorem~\ref{thm:ground}, suppose that $Z \neq \emptyset$ and the groupoid $c^{-1}(0)$ is \'{e}tale. Let~$\varphi_\psi$ be the $\sigma^c$-ground state on $C^*(G)$ corresponding to the state $\psi$ of $C^*(G_Z)$. If $\varphi_\psi$ is a $\sigma^c$-KMS$_\infty$-state, then $\psi$ is tracial.

In particular, if $C^*(G_Z)$ is noncommutative, then the set of $\sigma^c$-KMS$_\infty$-states on $C^*(G)$ is strictly smaller than the set of $\sigma^c$-ground states.
\end{corollary}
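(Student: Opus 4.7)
The plan is to reduce the first assertion to a general fact: any $\sigma$-KMS$_\infty$-state on a $C^*$-dynamical system $(A,\sigma)$ is tracial on the fixed-point subalgebra $A^\sigma$. Writing $\varphi_\psi = \lim_i \varphi_i$ in the weak$^*$ topology with each $\varphi_i$ a $\sigma^c$-KMS$_{\beta_i}$-state and $\beta_i \to \infty$, and noting that any $a \in C^*(G)^{\sigma^c}$ is entire analytic with $\sigma^c_z(a) = a$ for all $z \in \C$, the KMS condition gives $\varphi_i(ab) = \varphi_i(b\,\sigma^c_{i\beta_i}(a)) = \varphi_i(ba)$ for every $b \in C^*(G)$; passing to the limit yields $\varphi_\psi(ab) = \varphi_\psi(ba)$. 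The fixed-point subalgebra contains every $f \in C_c(G)$ supported in $c^{-1}(0)$, since in that case $\sigma^c_t(f)(g) = e^{itc(g)}f(g) = f(g)$.

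The next step is to transfer this traciality from $C^*(G)^{\sigma^c}$ to $\psi$ on $C^*(G_Z)$. For $f \in C_c(G)$ with $\mathrm{supp}(f) \subset c^{-1}(0)$ and arbitrary $h \in C_c(G)$, I would verify that restriction to $G_Z$ intertwines the convolution: for $g \in G_Z$, any term contributing to $(f*h)(g) = \sum_{k \in G^{r(g)}} f(k)h(k^{-1}g)$ has $c(k) = 0$ and $r(k) = r(g) \in Z$, so $s(k) \in Z$ by the $c^{-1}(0)$-invariance of $Z$ established in \lemref{kernellemma}; hence $k \in G_Z$ and $k^{-1}g \in G_Z$, which gives $(f*h)|_{G_Z} = f|_{G_Z} *_{G_Z} h|_{G_Z}$, with the symmetric statement for $h*f$. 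Combined with $\varphi_\psi(\cdot) = \psi(\,\cdot\,|_{G_Z})$ from \thmref{thm:ground}, this yields $\psi(\alpha\beta) = \psi(\beta\alpha)$ whenever $\alpha = f|_{G_Z}$ with $f \in C_c(G)$ supported in $c^{-1}(0)$ and $\beta = h|_{G_Z}$ with $h \in C_c(G)$. The \'etaleness of $c^{-1}(0)$ then ensures that such $\alpha$'s form a dense subset of $C^*(G_Z)$: a Tietze-type extension from the closed subset $G_Z$ of the \'etale subgroupoid $c^{-1}(0)$, combined with the fact that an \'etale subgroupoid sitting inside $G$ is (locally) open in~$G$, produces, for each $\alpha_0 \in C_c(G_Z)$, an $f \in C_c(G)$ with $\mathrm{supp}(f) \subset c^{-1}(0)$ and $f|_{G_Z} = \alpha_0$. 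I expect this final density/lifting step to be the main technical delicacy, since it hinges on carefully deploying the \'etale hypothesis on $c^{-1}(0)$ to ensure the required compactly supported extensions inside $G$ actually exist.

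For the ``in particular'' clause, I would note that a noncommutative $C^*$-algebra always admits non-tracial states: given $x, y \in C^*(G_Z)$ with $xy \neq yx$, Hahn--Banach separation provides a state $\psi$ with $\psi(xy) \neq \psi(yx)$. Applying \thmref{thm:ground} to such a non-tracial $\psi$ produces a $\sigma^c$-ground state $\varphi_\psi$ which, by the contrapositive of the first assertion, cannot be a $\sigma^c$-KMS$_\infty$-state; hence the set of KMS$_\infty$-states is strictly contained in the set of ground states.
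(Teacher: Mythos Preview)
Your argument is correct and follows essentially the same route as the paper's proof: both use that a KMS$_\infty$-state is tracial on the $\sigma^c$-fixed subalgebra $C^*(c^{-1}(0))\subset C^*(G)$, and then push this traciality down to $\psi$ on $C^*(G_Z)$. The paper streamlines your steps~3 and~5 into a single observation: since $Z$ is a closed $c^{-1}(0)$-invariant set, the restriction map $C_c(c^{-1}(0))\ni f\mapsto f|_{G_Z}$ extends to a \emph{surjective} $*$-homomorphism $\pi\colon C^*(c^{-1}(0))\to C^*(G_Z)$ with $\varphi_\psi|_{C^*(c^{-1}(0))}=\psi\circ\pi$, from which traciality of $\psi$ is immediate; your hands-on verification of multiplicativity and your Tietze/extension-by-zero density argument are exactly what underlies this $\pi$ (and your observation that an \'etale wide subgroupoid of an \'etale groupoid is open in it is precisely what makes $C^*(c^{-1}(0))\subset C^*(G)$ legitimate).
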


\bp
Let $\psi$ be a state of $C^*(G_Z)$ and assume the associated ground  state $\varphi_\psi$ is a $\sigma^c$-KMS$_\infty$-state. Then its  restriction to the subalgebra $C^*(c^{-1}(0))\subset C^*(G)$ is tracial. Since $G_Z$ is the reduction of~$c^{-1}(0)$ by the closed $c^{-1}(0)$-invariant set $Z$, the map $C_c(c^{-1}(0))\ni f\mapsto f|_{G_Z}$ extends to a homomorphism $\pi\colon C^*(c^{-1}(0))\to C^*(G_Z)$. As the restriction of $\varphi_\psi$ to $C^*(c^{-1}(0))$ equals $\psi\circ\pi$, it follows that $\psi$ is tracial.
\ep

Next we consider a class of $C^*$-dynamical systems arising from the reduction of a transformation groupoid to a subset that is not invariant. These include the systems of number theoretic origin based on Hecke algebras that constitute our main application in the next section.

Suppose that $\Gamma$ is a countable discrete
group acting on a second countable, locally compact topological space $X$.
The associated transformation groupoid is the set $\Gamma\times X$ endowed with the product given by $(\gamma',\gamma x)(\gamma,x)=(\gamma'\gamma,x)$; its unit space is naturally homeomorphic to $X$ and its $C^*$-algebra  is the crossed product $\Gamma \ltimes C_0(X)$, which we write with the group on the left for consistency with the groupoid notation. We denote by $\gamma \mapsto u_\gamma$ the canonical embedding of~$\Gamma$ as a group of unitaries into the multiplier algebra of $\Gamma \ltimes C_0(X)$,  and we view the canonical embedding of $C_0(X)$ simply as  inclusion. Then the products $u_\gamma f$ for $\gamma \in \Gamma$ and $f\in C_0(X)$ span a dense $*$-subalgebra of $\Gamma \ltimes C_0(X)$. There is a canonical (dual) coaction of $\Gamma$ on $\Gamma \ltimes C_0(X)$ given by $ u_\gamma f \mapsto u_\gamma f \otimes \lambda_\gamma$, where $\lambda_\gamma$ are the generators of the group $C^*$-algebra of $\Gamma$. For each multiplicative homomorphism $N: \Gamma \to \R^*_+$, there is a quotient coaction of $\R^*_+$ that can be interpreted as a regular action $\sigma^N$ of the additive group~$\R$, viewed as the dual of $\R^*_+$;  it is determined by $$\sigma_t^N(u_\gamma f) = N(\gamma)^{it} u_\gamma f.$$
This dynamics on $C^*(\Gamma\times X)=\Gamma \ltimes C_0(X)$ is the one induced by the $1$-cocycle $c$ on $\Gamma\times X$ given by
$$
c(\gamma,x) := \log N(\gamma), \qquad(\gamma, x) \in \Gamma\times X.
$$

Suppose now that $Y$ is a clopen subset of $X$ such that $\Gamma Y = X$ and form
the reduction of $\Gamma\times X$ by $Y$. By definition, this reduction is the groupoid
$$
\Gamma\boxtimes Y:=\{(\gamma,y)\in\Gamma\times Y\mid\gamma y\in Y\}
$$
in which the product is the obvious restriction of that on $\Gamma \times X$.
The $C^*$-algebra of this restricted groupoid  is canonically isomorphic to the corner
%\footnote{ the condition $\Gamma Y = X$ implies that this is a full corner.}
$\un_Y(\Gamma\ltimes C_0(X)) \un_Y$ of the crossed product $\Gamma\ltimes C_0(X)$.
At the level of $C^*(\Gamma \boxtimes Y)$, the restricted dynamics is determined by the restricted cocycle,
\[
\sigma^c_t( f ) (\gamma, y) = N(\gamma)^{it}f(\gamma,y), \qquad  f\in C_c(\Gamma \boxtimes Y)\ \text{and}\ (\gamma,y)\in \Gamma \boxtimes Y.
\]
%The dynamics $\sigma^c$ is quasi-periodic in the sense that it is the reduction by $Y$ of a quotient of the (discrete) dual coaction of $\Gamma$ on $\Gamma\ltimes C_0(X)$.

The first step towards the explicit description of  the  $\sigma^c$-ground states of $C^*(\Gamma\boxtimes Y)$ is to identify the boundary groupoid in the present situation.

\begin{theorem}\label{boundaryandgroundforc}
Suppose $\Gamma$ is a countable discrete group acting on a locally compact second countable space $X$, and $Y$ is a clopen subset of $X$ such that $\Gamma Y = X$. Assume $N: \Gamma \to (0,\infty)$ is a  multiplicative homomorphism and  let $c(\gamma, y)  = \log N(\gamma)$ for $\gamma \in \Gamma $ and $y\in Y$
be the associated 1-cocycle on $\Gamma\boxtimes Y$.
Then
\begin{enumerate}
\item  the boundary set of $c$ is
$$
Y_0 :=%\{y\in Y: \  N(\gamma)\geq 1 \text{ if } (\gamma,y) \in \Gamma \boxtimes Y \}=
Y\setminus \Big(\bigcup_{\gamma\colon N(\gamma)>1}\gamma Y \Big);
$$
\item the boundary groupoid $\Gamma\boxtimes Y_0$ coincides with $(\ker N)\boxtimes Y_0 $;
\item both $c\inv (0)=(\ker N)\boxtimes Y$ and  $(\ker N)\boxtimes Y_0 $  are \'{e}tale groupoids;
\item if  $Y_0\neq \emptyset$, then the map $\psi \mapsto \varphi_\psi$ defined by
\[
\varphi_\psi(f)=\psi(f|_{(\ker N)\boxtimes Y_0})\ \ \text{for}\ \ f\in C_c(\Gamma \boxtimes Y)
\]
as in \thmref{thm:ground}
is an affine isomorphism of the state space of $C^*((\ker N)\boxtimes Y_0)$ onto the $\sigma^c$-ground state space of $C^*(\Gamma\boxtimes Y)$;
\item  if $Y_0 = \emptyset$,  there are no $\sigma^c$-ground states on $C^*(\Gamma\boxtimes Y)$.
 \end{enumerate}
 \end{theorem}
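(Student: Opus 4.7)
The plan is to handle the five assertions in order. Most of the work will reduce to unfolding definitions and invoking \lemref{kernellemma} together with \thmref{thm:ground}.

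For (1), I would compute the boundary set directly from its definition. Since the source map on $\Gamma\boxtimes Y$ sends $(\gamma,y)$ to $y$, the source fiber at $y$ consists of those $(\gamma,y)$ with $\gamma y\in Y$, so $y$ belongs to the boundary set iff $N(\gamma)\geq 1$ whenever $\gamma y\in Y$. Substituting $\gamma\rightsquigarrow\gamma^{-1}$ and using that $\gamma y\in Y$ iff $y\in\gamma^{-1}Y$ converts this into the condition that no $\gamma$ with $N(\gamma)>1$ satisfies $y\in\gamma Y$, which is the defining condition for $Y_0$.

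Parts (2) and (3) follow from \lemref{kernellemma} once one notes that $c^{-1}(0)=(\ker N)\boxtimes Y$ by the very definition of~$c$. Because $\Gamma$ is discrete and $Y$ is clopen, $(\ker N)\boxtimes Y$ is a clopen subgroupoid of the étale transformation groupoid $\Gamma\boxtimes Y$, hence étale itself. \lemref{kernellemma} then yields that $Y_0$ is $c^{-1}(0)$-invariant, that the boundary groupoid $(\Gamma\boxtimes Y)_{Y_0}$ coincides with the reduction of $c^{-1}(0)$ by $Y_0$ (which is exactly $(\ker N)\boxtimes Y_0$), and that this groupoid is étale; together these give (2) and (3).

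Finally, (4) and (5) are immediate applications of \thmref{thm:ground}: when $Y_0\neq\emptyset$, that theorem furnishes the affine isomorphism $\psi\mapsto\varphi_\psi$ between the state space of the boundary groupoid $C^*$-algebra and the $\sigma^c$-ground state space of $C^*(\Gamma\boxtimes Y)$, and the displayed formula $\varphi_\psi(f)=\psi(f|_{(\ker N)\boxtimes Y_0})$ is just the restriction formula from that theorem combined with (2); when $Y_0=\emptyset$ the same theorem rules out ground states. The only mildly subtle step in the whole argument is the $\gamma\leftrightarrow\gamma^{-1}$ manipulation in (1), needed to reconcile the abstract boundary condition with the explicit description of $Y_0$; everything else is essentially formal.
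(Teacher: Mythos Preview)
Your proof is correct and follows essentially the same route as the paper: unfolding the definition of the boundary set for part (1), invoking \lemref{kernellemma} and the étale transformation-groupoid structure for parts (2)--(3), and appealing to \thmref{thm:ground} for parts (4)--(5). The only cosmetic differences are that the paper uses the range-fiber description of the boundary set (so no $\gamma\leftrightarrow\gamma^{-1}$ substitution is needed) and verifies (2) by a direct one-line computation rather than citing \lemref{kernellemma}, but these are not substantive distinctions.
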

 \begin{proof}
First notice that, by definition, the boundary set of $c$ is
$$
\{y\in Y:\ \text{if}\ y=\gamma z\in Y\ \text{for some}\ \gamma\in\Gamma\ \text{and}\ z\in Y,\ \text{then}\ N(\gamma)\le1\},
$$
so part (1) is clear.
If $ \gamma Y_0\cap Y_0\ne\emptyset$ we also have $  Y_0\cap \gamma\inv Y_0\ne\emptyset$  hence
$N(\gamma) \geq1$ and also $N(\gamma\inv) \geq 1$ so $N(\gamma) =1$.
Thus the reduction of $\Gamma \boxtimes Y$ by $Y_0$  is $(\ker N)\boxtimes Y_0$, proving part (2).
The groupoid $c^{-1}(0)$  is  \'{e}tale  because
$Y$ is clopen and $\Gamma$ is discrete, and the rest of part (3)
follows from \lemref{kernellemma}.
Since the boundary groupoid of $c$ is $(\ker N)\boxtimes Y_0 $,
parts (4) and (5) follow from Theorem~\ref{thm:ground}.
\end{proof}

\begin{lemma}\label{Sdefinition}
In the situation of \thmref{boundaryandgroundforc} let
\[
S :=\{\gamma\in\Gamma: \gamma Y_0\cap Y\neq \emptyset\}
\quad
\text{ and }
\quad
S_0:=\{\gamma \in \Gamma \mid \gamma Y_0\cap Y_0\neq \emptyset\}.
\]
Then $S_0 \subset S \cap S\inv \subset \ker N$ and $N(\gamma) \geq 1$ for every $\gamma \in S$.
If in addition  the series
\begin{equation}\label{eq:critical-beta}
\zeta_S(\beta):=\sum_{s\in S}N(s)^{-\beta}
\end{equation}
has a finite abscissa of convergence, then $(\ker N) \cap S$ is a finite set.
 \end{lemma}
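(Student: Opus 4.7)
The plan is to unpack the definitions of $Y_0$, $S$, and $S_0$ to check the set-theoretic containments, and then argue that if $(\ker N)\cap S$ were infinite, the series $\zeta_S(\beta)$ could not converge for any $\beta$.

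First, for $S_0 \subset S\cap S^{-1}$, given $\gamma\in S_0$ I would pick $y_0\in Y_0$ with $\gamma y_0\in Y_0\subset Y$. This directly yields $\gamma\in S$, and applying $\gamma^{-1}$ to $\gamma y_0\in Y_0$ shows that $\gamma^{-1}$ maps an element of $Y_0$ (namely $\gamma y_0$) into $Y$ (namely $y_0$), so $\gamma^{-1}\in S$ as well. Next, the core containment $S\cap S^{-1}\subset \ker N$ reduces to the claim that $N(\gamma)\ge 1$ for every $\gamma\in S$. Indeed, if $\gamma y_0\in Y$ for some $y_0\in Y_0$, then writing $y_0=\gamma^{-1}(\gamma y_0)$ exhibits $y_0$ as a point of $\gamma^{-1}Y$; by the definition $Y_0 = Y\setminus \bigcup_{\delta:\,N(\delta)>1}\delta Y$ from part (1) of \thmref{boundaryandgroundforc}, this forces $N(\gamma^{-1})\le 1$, i.e.\ $N(\gamma)\ge 1$. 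Consequently, for $\gamma\in S\cap S^{-1}$ we have both $N(\gamma)\ge 1$ and $N(\gamma)^{-1}=N(\gamma^{-1})\ge 1$, yielding $N(\gamma)=1$, so $\gamma\in\ker N$.

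For the finiteness statement I would reason contrapositively. If $(\ker N)\cap S$ were infinite, then every term $N(s)^{-\beta}$ with $s\in (\ker N)\cap S$ equals $1$ regardless of $\beta$, so the subseries of $\zeta_S(\beta)$ indexed by $(\ker N)\cap S$ already diverges. Hence $\zeta_S(\beta)=+\infty$ for every $\beta\in\R$, making the abscissa of convergence of $\zeta_S$ equal to $+\infty$, contrary to the hypothesis that it is finite. I do not anticipate any real obstacle; the whole lemma is a compact bookkeeping exercise, and the only point that requires a moment of care is extracting the correct direction of the inequality $N(\gamma)\ge 1$ from the explicit description of $Y_0$.
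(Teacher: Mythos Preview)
Your proof is correct and follows essentially the same approach as the paper, which simply refers back to the argument for part~(2) of \thmref{boundaryandgroundforc} for the chain of inclusions and dismisses the finiteness as an elementary fact about Dirichlet series. You have merely spelled out in detail what the paper leaves implicit.
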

\begin{proof}
The chain of inclusions is part of the proof of \thmref{boundaryandgroundforc}, the rest follows from elementary considerations about Dirichlet series.
\end{proof}

\begin{remark}\label{remarkS_0finite}
Clearly we have the equality $(\ker N) \boxtimes Y_0 = S_0 \boxtimes Y_0$ of sets. Therefore, if $S_0$ is finite, then the orbit and the isotropy group of every $y_0\in Y_0$ under the action of the groupoid $(\ker N) \boxtimes Y_0$ has at most $|S_0|$ elements each, and thus  $C^*(\Gamma \boxtimes Y_0)=C^*((\ker N) \boxtimes Y_0)$ is a subhomogeneous  $C^*$-algebra.
\end{remark}

Next we consider a set-up in which the characterization of KMS$_\beta$-states from \thmref{thm:KMS}
has an explicit interpretation in terms of traces on the $C^*$-algebra of the boundary groupoid.

\begin{theorem}[{cf.~\cite[Proposition~1.2]{LLN}}]\label{KMSandgroundforY}
Let $X$, $Y$, $\Gamma$, $N$, $c$ and $Y_0$ be as in \thmref{boundaryandgroundforc}. Suppose that $Y_0\neq \emptyset$, let
$$
S :=\{\gamma\in\Gamma: \gamma Y_0\cap Y\neq \emptyset\}
$$
be defined as in \lemref{Sdefinition} and suppose that the series $\zeta_S(\beta) $ has a finite abscissa of convergence $\beta_0 <\infty$.
Assume that there are Borel subsets $Y_n\subset Y$ and elements $\gamma_n\in\Gamma$ for $n=1,2,\dots$ such that
\begin{enumerate}
\item[(i)] for every neighborhood $U$ of $Y_0$ we have $Y\setminus SU\subset\cup^\infty_{n=1}Y_n$;
\item[(ii)] $N(\gamma_n)\ne1$ and $\gamma_nY_n=Y_n$ for all $n\ge1$.
\end{enumerate}
Then, for each $\beta\in(\beta_0,+\infty]$, there is  an affine isomorphism of the simplex of tracial states on $C^*((\ker N)\boxtimes Y_0)$
onto the  simplex of $\sigma^c$-KMS$_\beta$-states on $C^*(\Gamma\boxtimes Y)$.
\end{theorem}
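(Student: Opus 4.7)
The plan is to apply Theorem \ref{thm:KMS} to both sides and match the resulting measure-theoretic data. On the boundary side, Theorem \ref{thm:KMS} applied to the trivial cocycle on $(\ker N)\boxtimes Y_0$ parametrizes tracial states of $C^*((\ker N)\boxtimes Y_0)$ by pairs $(\mu_0,\{\tau_{y_0}\}_{y_0\in Y_0})$, with $\mu_0$ a $(\ker N)$-invariant probability measure on $Y_0$ and $\{\tau_{y_0}\}$ a compatible measurable field of traces on isotropy groups. On the ambient side, $\sigma^c$-KMS$_\beta$-states of $C^*(\Gamma\boxtimes Y)$ correspond to pairs $(\mu,\{\varphi_y\}_{y\in Y})$, where $\mu$ is quasi-invariant on $Y$ with Radon-Nikodym cocycle $e^{-\beta c}$ and $\{\varphi_y\}$ is a similar field of traces.

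The first step is to show that, for $\beta\in(\beta_0,\infty)$, any quasi-invariant $\mu$ arising from a KMS$_\beta$-state is supported on a neighborhood of $Y_0$ and is determined by its restriction there. Applying quasi-invariance to the pair $(\gamma_n,Y_n)$ of hypothesis (ii) gives $\mu(Y_n) = N(\gamma_n)^{-\beta}\mu(Y_n)$, and since $N(\gamma_n)\ne 1$ and $\beta\ne 0$ this forces $\mu(Y_n)=0$. Hypothesis (i) then yields $\mu(Y\setminus SU)=0$ for every neighborhood $U$ of $Y_0$, so $\mu$ is concentrated on $\bigcap_U SU$. The quasi-invariance estimates $\mu(sA\cap Y)\le N(s)^{-\beta}\mu(A)$ summed over $s\in S$ converge by $\zeta_S(\beta)<\infty$; this shows $\mu$ is a convergent sum of translates of $\mu_0:=\mu|_{Y_0}$, and $\mu_0$ is itself $(\ker N)$-invariant because $c$ vanishes on $\ker N$.

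Building the explicit bijection in the other direction, from $(\mu_0,\{\tau_{y_0}\})$ one defines
\[
\mu(B) = Z_\beta^{-1} \sum_{[s]\in S/S_0} N(s)^{-\beta}\,\mu_0(s^{-1}B\cap Y_0)
\]
for a suitable normalization $Z_\beta$, and sets $\varphi_y$ at $y=sy_0$ by transporting $\tau_{y_0}$ through the conjugation isomorphism $(\Gamma\boxtimes Y)^y_y \cong ((\ker N)\boxtimes Y_0)^{y_0}_{y_0}$ given by $g\mapsto s^{-1}gs$. The ambiguity when $sy_0 = s'y_0'$ is resolved using $s^{-1}s'\in S_0\subset\ker N$ from \lemref{Sdefinition}: this ensures $N(s)=N(s')$ and, together with the trace compatibility in condition (ii) of \thmref{thm:KMS}, makes the definition independent of the representative. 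The inverse map simply restricts $\mu$ to $Y_0$ and sets $\tau_{y_0}=\varphi_{y_0}$. Both assignments are affine and one checks directly that the constructions satisfy the hypotheses of \thmref{thm:KMS}.

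The case $\beta=\infty$ follows by weak$^*$ compactness: each KMS$_\infty$-state is a weak$^*$ cluster point of KMS$_{\beta_k}$-states with $\beta_k\to\infty$, and in this limit only the terms with $s\in S_0$, that is, with $N(s)=1$, survive in the summation formula for $\mu$, so $\mu$ collapses onto $Y_0$ and the bijection reduces to the restriction of states to $C^*((\ker N)\boxtimes Y_0)$. The principal technical obstacle is verifying that the summation formula for $\mu$ defines a genuinely quasi-invariant probability measure with the prescribed cocycle, and that the field $\{\varphi_y\}$ extends measurably from $Y_0$ to all of $Y$ while satisfying condition (ii) of \thmref{thm:KMS} throughout; both rely crucially on $\zeta_S(\beta)<\infty$ for $\beta>\beta_0$ and on the inclusion $S_0\subset\ker N$.
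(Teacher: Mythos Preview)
Your overall strategy matches the paper's: reduce both sides via \thmref{thm:KMS} to measure-plus-trace data and build a bijection at that level. However, several steps as written have genuine gaps.

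\textbf{The positivity of $\mu(Y_0)$ is missing.} You argue that $\mu(Y_n)=0$ and hence $\mu$ is concentrated on $\bigcap_U SU$, and then immediately declare $\mu$ to be a convergent sum of translates of $\mu_0:=\mu|_{Y_0}$. But nothing you have written forces $\mu_0\ne 0$. The paper extracts exactly this from your ingredients: from $\mu(SU\cap Y)=1$ and the Radon--Nikodym condition one gets
\[
1\le\sum_{s\in S}\mu(sU\cap Y)\le\sum_{s\in S}N(s)^{-\beta}\mu(U)=\zeta_S(\beta)\,\mu(U),
\]
so $\mu(Y_0)\ge\zeta_S(\beta)^{-1}>0$. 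Without this, your inverse map (restriction and renormalization) is undefined. Relatedly, $\bigcap_U SU$ need not equal $SY_0$ when $S$ is infinite; the paper instead shows concentration on $SY_0\cap Y$ by a contradiction argument using the positivity just established.

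\textbf{The index set $S/S_0$ is ill-defined.} The set $S_0$ from \lemref{Sdefinition} is a symmetric subset of $\ker N$ containing the identity, but there is no reason it is a subgroup, so ``$S/S_0$'' does not make sense, and in any case the sets $sY_0$ for $s\in S$ can overlap in complicated ways. The paper avoids this by choosing a Borel disjointification: elements $\gamma_i\in\Gamma$ and Borel sets $Z_i\subset Y_0$ with $\Gamma Y_0=\bigsqcup_i\gamma_iZ_i$, and defines $\tilde\nu_\beta(A)=\sum_iN(\gamma_i)^{-\beta}\nu(\gamma_i^{-1}A\cap Z_i)$. One must then check this is independent of the choices and has the correct Radon--Nikodym cocycle; the paper cites \cite[Lemma~2.2]{LLN-GL2} for this. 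Your formula as stated can over- or under-count.

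\textbf{The case $\beta=\infty$ needs both directions.} Your limiting argument indicates why a KMS$_\infty$-state restricts to a tracial state on the boundary (this is essentially \corref{cor:KMSinfty}), but you do not show surjectivity. The paper fixes a tracial state $\tau$, takes the KMS$_\beta$-states $\varphi_\beta$ corresponding to $\tau$ for each finite $\beta>\beta_0$, and shows that the measures $\mu_\beta$ converge \emph{in norm} to $\nu$ (using $\mu_\beta(Y_0)\to 1$), so that $\varphi_\beta$ converges in norm to the ground state attached to $\tau$. This produces the required KMS$_\infty$-state for every $\tau$.
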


\bp
We consider first the case $\beta\in(\beta_0,+\infty)$.
By Theorem~\ref{thm:KMS} we know that in order to describe the KMS$_\beta$-states we first of all have to describe all quasi-invariant Borel probability measures on~$Y$ with Radon--Nikodym cocycle $e^{-\beta c}$, that is, measures $\mu$ such that if $A\subset Y$ is Borel and $\gamma A\subset Y$ for some $\gamma\in\Gamma$, then
\begin{equation}\label{eq:RN}
\mu(\gamma A)=N(\gamma)^{-\beta}\mu(A).
\end{equation}
We claim that for any such measure $\mu$ we have $\mu(Y_0)>0$ and the map $\mu\mapsto\mu(Y_0)^{-1}\mu|_{Y_0}$ is an affine isomorphism between the set of all such measures and the set of Borel probability measures on $Y_0$ invariant under the partial action of $\ker N$.

By assumption (ii) and property~\eqref{eq:RN}, we have $\mu(Y_n)=0$ for all $n\ge1$. Then assumption (i) implies that $SU\cap Y$ is a subset of $Y$ of full measure for any open neighborhood $U$ of $Y_0$ in $Y$. Hence
$$
1=\mu(SU\cap Y)\le\sum_{s\in S}\mu(sU\cap Y)\le\sum_{s\in S}N(s)^{-\beta}\mu(U)=\zeta_S(\beta)\mu(U).
$$
It follows that $\mu(Y_0)\ge\zeta_S(\beta)^{-1}$, which proves the first part of our claim.

Next, let us show that $\mu$ is concentrated on $\Gamma Y_0\cap Y=SY_0\cap Y$. Indeed, if this were not the case, the measure $\tilde\mu$ defined by
$$
\tilde\mu(A)=\mu(Y\setminus\Gamma Y_0)^{-1}\mu(A\setminus\Gamma Y_0)
$$
would be a quasi-invariant Borel probability measure with Radon--Nikodym cocycle $e^{-\beta c}$.  By what we have just proved  this would mean $\tilde\mu(Y_0)>0$, contradicting $\tilde\mu(Y_0)=0$, which
holds by construction.

It follows that $\mu$ is completely determined by its restriction to $Y_0$, which is a measure invariant under the partial action of $\ker N$. To finish the proof of the claim it remains to show that any $(\ker N)$-invariant probability measure on $Y_0$ arises from a probability measure $\mu$ satisfying \eqref{eq:RN}. Let $\nu$ be such a measure on $Y_0$. We can choose distinct elements $\gamma_i\in\Gamma$ and Borel subsets $Z_i\subset Y_0$ such that $\Gamma Y_0$ is the disjoint union of the sets~$\gamma_i Z_i$. Define a Borel measure~$\tilde\nu_\beta$ on~$X$~by
$$
\tilde\nu_\beta(A)=\sum_iN(\gamma_i)^{-\beta}\nu(\gamma_i^{-1}A\cap Z_i).
$$
As in the proof of~\cite[Lemma~2.2]{LLN-GL2}, one sees that this measure does not depend on the choice
of $\gamma_i$ and $Z_i$, satisfies~\eqref{eq:RN} for all Borel subsets $A\subset X$ and all $\gamma\in\Gamma$, and $\tilde\nu_\beta|_{Y_0}=\nu$. Furthermore, if $\gamma_i^{-1}Y\cap Z_i\ne\emptyset$ for some $i$, then $\gamma_i\in S$, which implies that
$$
\tilde\nu_\beta(Y)\le\sum_{i\colon\gamma_i\in S}N(\gamma_i)^{-\beta}\le\zeta_S(\beta).
$$
Therefore $\mu_\beta=\tilde\nu_\beta(Y)^{-1}\tilde\nu_\beta|_Y$ is the required measure on $Y$. This completes the proof of the claim.

\smallskip

Later in the proof we will need the following consequence of the above considerations. By the $c^{-1}(0)$-invariance of the boundary set, an element $\gamma\in\Gamma$ can map a point in $Y_0$ into $Y\setminus Y_0$ only if $N(\gamma)>1$. It follows that $\tilde\nu_\beta(Y\setminus Y_0)\le\sum_{s\in S\colon N(s)>1}N(s)^{-\beta}$, whence
$$
\mu_\beta(Y_0)\ge\left(1+\sum_{s\in S\colon N(s)>1}N(s)^{-\beta}\right)^{-1}.
$$
Hence $\mu_\beta(Y_0)\to1$ as $\beta\to+\infty$, and therefore the measures $\mu_\beta$ converge in norm to the measure $\nu$ on $Y_0$ we started with (considered as a measure on $Y$).

\smallskip

Let us now complete the classification of KMS$_\beta$-states for $\beta\in(\beta_0,+\infty)$. By Theorem~\ref{thm:KMS}, every such state~$\varphi$ is given by a probability measure $\mu$ on $Y$ satisfying~\eqref{eq:RN} and a $\mu$-measurable field of tracial states~$\tau_y$ on the $C^*$-algebras of the stabilizers $\Gamma_y\subset\Gamma$ of points $y\in Y$ such that
\begin{equation}\label{eq:trace}
\tau_y=\tau_{\gamma y}(u_\gamma\cdot u_\gamma^{-1})
\end{equation}
whenever $y\in Y$ and $\gamma y\in Y$. As we showed above, the measure $\mu$ is concentrated on $\Gamma Y_0\cap Y$ and is completely determined by its restriction to $Y_0$. Condition~\eqref{eq:trace} shows then that, modulo a set of $\mu$-measure zero, the field $(\tau_y)_{y\in Y}$ is also determined by its restriction to~$Y_0$.
By Theorem~\ref{thm:KMS}, the measure $\nu=\mu(Y_0)^{-1}\mu|_{Y_0}$ together with the field $(\tau_y)_{y\in Y_0}$ define a tracial state on $C^*((\ker N)\boxtimes Y_0)$. We therefore get an injective affine map from the set of KMS$_\beta$-states on $C^*(\Gamma\boxtimes Y)$ into the set of tracial states on $C^*((\ker N)\boxtimes Y_0)$.

Conversely, if we now start with a tracial state on $C^*((\ker N)\boxtimes Y_0)$, we see that, again by Theorem~\ref{thm:KMS}, it is given by a probability measure $\nu$ on $Y_0$ invariant under the partial action of $\ker N$ and a field of tracial states~$\tau_y$ on~$C^*(\Gamma_y)$, $y\in Y_0$, satisfying~\eqref{eq:trace} whenever $y\in Y_0$ and $\gamma y\in Y_0$. As we showed above, the measure $\nu$ arises from a probability measure $\mu$ on $Y$ satisfying~\eqref{eq:RN}. The field $(\tau_y)_{y\in Y_0}$ extends in a unique way to a field of tracial states $\tau_y$, $y\in \Gamma Y_0$, satisfying~\eqref{eq:trace}. As $\mu(Y\setminus\Gamma Y_0)=0$, the pair $(\mu,(\tau_y)_{y\in Y\cap\Gamma Y_0})$ defines a KMS$_\beta$-state of $C^*(\Gamma\boxtimes Y)$. This finishes the proof of the result in the case of finite $\beta$.

\smallskip

Turning to the remaining case of KMS$_\infty$-states, take a tracial state $\tau$ on $C^*((\ker N)\boxtimes Y_0)$. For each $\beta\in(\beta_0,+\infty)$, let $\varphi_\beta$ be the KMS$_\beta$-state corresponding to $\tau$. As we observed above, the measures~$\mu_\beta$ on~$Y$ defined by $\varphi_\beta|_{C_0(Y)}$ converge in norm as $\beta\to+\infty$ to the measure $\nu$ on $Y_0$ defined by $\tau|_{C_0(Y_0)}$. From the construction of $\varphi_\beta$ it follows then that the states $\varphi_\beta$ converge in norm to a KMS$_\infty$-state $\varphi$ such that $\varphi(f)=\tau(f|_{(\ker N)\boxtimes Y})$ for $f\in C_c(\Gamma\boxtimes Y)$. But by Corollary~\ref{cor:KMSinfty} we know that any KMS$_\infty$-state has this form for some tracial state $\tau$. Hence we get the required affine isomorphism.
\ep

\section{Hecke algebras of orientation preserving affine groups}\label{KMShecke}

By a Hecke pair $(G,\Gamma)$ we mean a group $G$ with a subgroup $\Gamma$ such that every double coset $\Gamma g\Gamma$ contains finitely many left and right cosets of $\Gamma$.
The Hecke algebra $\hh(G,\Gamma)$ of such a pair is defined as the vector space of  complex valued $\Gamma$-bi-invariant
functions on $G$ supported on finitely many double cosets,
endowed with the product
$$
(f_1\ast f_2)(g)%=\sum_{h\in \Gamma\backslash G}f_1(gh^{-1})f_2(h)
=\sum_{h\in G/\Gamma} f_1(h)f_2(h^{-1}g).
$$
This construction of course also makes sense for functions with values in any field $K$, in which case we denote the Hecke algebra by $\hh_K(G,\Gamma)$.
For $g\in G$, we denote by $[\Gamma g\Gamma]\in \hh_K(G,\Gamma)$ the characteristic function of the double coset $\Gamma g\Gamma$.

When $K=\C$, we in addition have an involution on $\hh(G,\Gamma)$ defined by
$$
f^*(g)=\overline{f(g^{-1})}.
$$
Then the formula
$$
(\lambda(f)\xi)(g)=\sum_{h\in G/\Gamma}f(h)\xi(h^{-1}g)
$$
defines a $*$-representation $\lambda$ of $\hh(G,\Gamma)$ on $\ell^2(\Gamma\backslash G)$. The \emph{Hecke $C^*$-algebra} of $(G,\Gamma)$ is defined as the norm closure of $\lambda(\hh(G,\Gamma))\subset B(\ell^2(\Gamma\backslash G))$ and is denoted by $C^*_r(G,\Gamma)$, cf.~\cite{bos-con}. We remark that in general $\hh(G,\Gamma)$ does not admit a universal enveloping $C^*$-algebra.

\medskip

Consider now a number field $K$ with ring of integers $\OO$. An element of $K$ is called {\em totally positive} if it is positive in every real embedding of $K$. Let $\kps$ be the multiplicative group of totally positive elements, with $\op^\times := \ox\cap \kps$ the monoid of totally positive algebraic integers in $\kk$, and  $\ops := \os \cap \kps$ the group of totally positive units in $\oo$. Following \cite{bos-con,LNT} we consider the pair
$$
\po:=\left(\begin{array}{ll}1&{\OO}\\0&{{\OO}_+^*}\end{array}\right)\ \  \subset \ \ \pk:=\left(\begin{array}{ll}1&{K}\\0&{{K}_+^*}\end{array}\right).
$$
That this is a Hecke pair of groups can be seen by embedding it into a topological pair, see \cite[Section~2]{LNT}, and also by a direct approach that counts the left cosets in every double coset, along the lines of \cite{bos-con,LvF}.  Both $\pk$ and $\po$ are groups of affine transformations of $K$ that preserve the orientation in every real embedding, so we will refer to them as the \emph{orientation preserving Hecke pair of affine groups} associated to $K$.
The associated Hecke $C^*$-algebra $\heck$ has a universal property with respect to $*$-representations of $\hh(\pk, \po)$,
and also a presentation in terms of generators and relations arising from two classes of double cosets which we discuss briefly next.

Following \cite{bos-con} we consider distinguished elements in $\heck$ given by two specific collections of double cosets. For each $a\in \opx$ let $N_a = |\oo /a\oo|$ be the absolute norm of $a$ and define
\begin{gather}\label{E: mua}
\mu_a:= \frac{1}{\sqrt{N_a}}\left[\po\matr{0}{a}\po\right]. %\qquad \text{ for } a\in \oxs
\end{gather}
For each $r\in \kk$ let  $R(r)$ be the
number of right cosets in the double coset of $\smatr{r}{1}$, and  define
\begin{gather}\label{E: er}
e_r:= \frac{1}{R(r)}\left[\po\matr{r}{1} \po\right] .%\qquad \text{ for }
\end{gather}
An argument similar to the proof of \cite[Lemma 1.2]{LvF} shows that $R(r)$ is equal to the cardinality of the $\ops$-orbit of $r$ modulo $\oo$, or, equivalently, the index of the subgroup
${\ops}_r := \{u\in \ops: ur = r \pmod \oo\}$ in $\ops$; see also \cite[Proposition 1.3]{llnfinpart}.
\begin{proposition}\label{Heckepresentation}
The elements $\mu_a$  and $e_r$ satisfy the following relations:
\begin{align}
&\mu_w=1, \qquad w\in \ops ;\label{mufirst}\\
&\mu_a^*\mu_a^{}=1, \qquad a\in \opx ;\\
 &\mu_a\mu_b=\mu_{ab}, \qquad a,b \in \opx ;\label{mulast}\\
% \end{align}
%\begin{align}
&  e_{wr + b} = e_r   , \qquad r\in \kk, \  w\in \ops,\ b\in \oo ; \label{tetamid}\\
&e_0 = 1 ; \label{tetafirst} \\
&   e_r^*  = e_{-r}, \qquad r\in \kk ; \\
&e_r e_s = \frac{1}{R(r)} \;
\frac{1}{R(s)}\sum_{u\in\ops \!/\stabop{r}}\sum_{v\in\ops
\!/\stabop{s}}e_{ur+vs},
  \qquad r,s\in \kk, \label{tetalast}
\end{align}
and
\begin{gather}\label{covariance}
\mu_a^{} e_r \mu_a^*
= \frac{1}{N_a}\sum_{b\in\oo\!/a\oo}e_{\frac{r+b}{a}},
\qquad a\in \opx, \ r\in \kmo.
\end{gather}
Moreover, the above relations give a presentation of $\hh(\pk,\po)$ as a $*$-algebra, and of $\heck$ as a $C^*$-algebra.
\end{proposition}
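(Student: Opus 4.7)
The plan is to verify the listed relations directly from the convolution product on $\hh(\pk,\po)$ and then establish the presentation claims by combining a generation argument with a normal form analysis on the universal $*$-algebra. For the $\mu$-relations, a direct matrix computation shows that $\matr{0}{a}\inv\po\matr{0}{a}$ is a subgroup of $\po$ of index $N_a$; consequently $\po\matr{0}{a}\po$ is a disjoint union of exactly $N_a$ left $\po$-cosets (indexed by $\oo/a\oo$), and the normalization by $\sqrt{N_a}$ in the definition of $\mu_a$ is precisely what is needed to produce \eqref{mufirst}--\eqref{mulast}. The simpler relations on the $e_r$'s, namely \eqref{tetamid}--\eqref{tetafirst} and $e_r^*=e_{-r}$, are immediate from the double coset identity $\po\matr{wr+b}{1}\po=\po\matr{r}{1}\po$ for $w\in\ops$ and $b\in\oo$ (visible, for instance, by writing $\matr{wr+b}{1}=\matr{0}{1/w}\matr{r}{1}\matr{b}{w}$), together with $\matr{r}{1}\inv=\matr{-r}{1}$ and the fact that $R(\cdot)$ is constant on double cosets.

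For the product formula \eqref{tetalast}, the crucial input is the decomposition of $\po\matr{s}{1}\po$ into the right $\po$-cosets $\matr{vs}{1}\po$ indexed by $v\in\ops/\stabop{s}$, as recalled in the text preceding the proposition. Substituting this decomposition into the convolution formula and using $\matr{vs}{1}\inv\matr{z}{1}=\matr{z-vs}{1}$, one expresses $e_re_s$ as a sum of double coset characteristics $[\po\matr{r+vs}{1}\po]$ weighted by $1/(R(r)R(s))$; symmetrizing over the $\ops$-action on $r$, which is harmless by \eqref{tetamid}, then reproduces \eqref{tetalast}. The covariance relation \eqref{covariance} is verified in a similar spirit: one decomposes $\po\matr{0}{a}\po$ into left $\po$-cosets indexed by $b\in\oo/a\oo$, substitutes into the double convolution $\mu_a\ast e_r\ast\mu_a^*$, and reads off the coefficient $1/N_a$ from the normalization.

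For the presentation claims, I would follow the strategy of \cite{bos-con,LvF}. Every element of $\pk$ has the form $\matr{0}{a}\matr{r}{1}$ with $a\in\kps$ and $r\in K$, and iterated use of the covariance relation \eqref{covariance} allows one to express the characteristic function of any double coset as a linear combination of products of the generators; hence the $\mu_a$'s and $e_r$'s span $\hh(\pk,\po)$. Completeness of the relations is then established by introducing on the universal $*$-algebra $\mathcal A$ a normal form of the shape $\mu_a^*e_r\mu_b$ (modulo the orbit actions of $\ops$) and verifying that the number of such normal forms matches the cardinality of $\po\backslash\pk/\po$, which forces the canonical surjection $\mathcal A\twoheadrightarrow\hh(\pk,\po)$ to be an isomorphism. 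For the $C^*$-algebra presentation, one verifies that the universal $C^*$-algebra defined by these generators and relations exists (this follows from the isometry relation $\mu_a^*\mu_a=1$ together with the norm bounds on the $e_r$'s coming from the product formula) and that it is canonically isomorphic to $\heck$ via the left regular representation on $\ell^2(\po\backslash\pk)$.

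The main obstacle is the completeness of the relations: one must carefully track how $\ops$ acts on both $\oo$ and $K/\oo$, and verify that the symmetric sums in \eqref{tetalast} together with the covariance relation \eqref{covariance} are enough to reduce every monomial in the generators to a unique normal form, without hidden identities coming from the nontrivial stabilizer subgroups $\stabop{r}$.
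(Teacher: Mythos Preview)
Your proposal is correct and follows essentially the same approach as the paper: the paper's own proof is simply a reference to the analogous arguments in \cite{LvF} (Proposition~1.7 and Theorem~1.10 there), noting only the minor wrinkle that $e_r$ is no longer selfadjoint because $-1\notin\ops$. Your sketch spells out exactly what those cited arguments contain---the coset decompositions for the $\mu$- and $e$-relations, the covariance computation, and the normal-form argument for completeness---so there is nothing to add.
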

The proof goes along the same lines as the proofs of Proposition 1.7 and Theorem 1.10 in \cite{LvF},
with the minor difference that since the unit $-1$ is not totally positive, the generators $e_r$ are not selfadjoint (for $r\ne0$) in contrast to the situation studied in \cite{LvF}.

It is immediate from the relations that $\mu_a$ depends only on $a$ modulo $\ops$,
while $e_r$ depends only on the $\ops$-orbit of $r+\oo \in \kmo$.
Thus,  $\mu$ gives a representation of $\opx/\ops$ by isometries and
the multiplication rule of the $e_r$ reflects the multiplication
 of $\ops$-averages of generating unitaries  in $C^*(\kmo)$.
By~\eqref{tetalast}, the linear span of $\{e_r: r\in \kk\}$ is a commutative, unital $*$-subalgebra of\/ $\mathcal H(\pk,\po)$, which happens to be universal for the relations \eqref{tetafirst}-\eqref{tetalast}
in the category of $*$-algebras.
The closure of $\lspa \{e_r: r\in \kk\}$ inside $\heck$ is a commutative $C^*$-algebra and
is universal, for the same relations, in the category of $C^*$-algebras,
see~\cite[Proposition~1.6]{LvF} for a similar argument.

The usual time evolution on the $C^*$-algebra of a Hecke pair is defined in terms of the modular homomorphism
$$
\Delta(g) :=\frac{|\po\backslash \po g\po|}{|\po g \po/\po|},\qquad g\in \pk,
$$
which for our Hecke pair is given explicitly by
\[
\Delta\begin{pmatrix}1 & y\\ 0 & x\end{pmatrix}=N_K(x)^{-1},\qquad y\in K \text{ and } x\in K^*_+,
\]
where $N_K(x)$ is the absolute norm of the principal fractional ideal $(x)$, so $N_K(a)=N_a=|\oo /a\oo|$ if $a\in\OO^\times$.
Accordingly, we define a time evolution on $\heck$ by
\begin{equation}\label{eq:modular-function}
\sigma_t([\po g \po]) :=\Delta(g)^{-it}[\po g \po]\ \ \text{for}\ \ g\in  \pk.
\end{equation}
On generators this amounts to
 \[
 \sigma_t(\mu_a) = N_a^{-it}\mu_a \quad\text{ and } \quad \sigma_t(e_r) = e_r.
 \]

The $C^*$-algebra $\heck$ can be written as a semigroup crossed product, which, in turn, can be realized as the $C^*$-algebra of a reduction of a transformation groupoid, see~\cite[Proposition~2.2]{LNT}. This will allow us to compute KMS-states and ground states using the techniques from \secref{groupoids}.

The right hand side of \eqref{covariance} defines an action $\alpha$ of the semigroup $\opx/\ops$
by injective endomorphisms  of $C^*(e_r: r \in \kk)$ given~by
 \[
 \alpha_a ( e_r) :=  \frac{1}{N_a}\sum_{b\in\oo\!/a\oo}e_{\frac{r+b}{a}}.
 \]
That this is indeed an action by endomorphisms can be proved directly using the presentation of  $C^*(e_r: r \in \kk)$,
but is actually obvious if we use the left-hand side of \eqref{covariance}.
A standard argument using universal properties now shows that $\heck$ is canonically isomorphic to the semigroup crossed product
$\opx/\ops \ltimes C^*(e_r: r\in \kk)$.
%in which we write the semigroup on the left for consistency with our notation for transformation groupoids.

The spectrum of $C^*(e_r: r\in \kk)$ is most naturally described in terms of the ring $\akf$ of finite adeles associated with $K$. We denote by $\hat\OO\subset\akf$ the compact subring of integral adeles. Viewed as an abelian group, $\hatok$ can be identified with the Pontryagin dual of $\kmo$, although there is no such canonical identification. Consider the closure $\opsbar$  of $\ops$ inside the integral ideles $\hatok^*$. It is a compact group under multiplication, which also coincides with the natural profinite compactification of $\ops$ obtained from its action on $\kmo$. Then $C^*(e_r: r\in \kk) \cong C(\hatok/\opsbar)$, where $\hatok/\opsbar$ denotes the orbit space for the action of $\opsbar$ on $\hatok$ by multiplication.

The group $\kps$ of totally positive elements in $\kk$ acts
by multiplication on $\akf$.
Let  $\Gamma := \kps/\ops$. Since the action of $\ops$ on $\akf$ is not trivial, multiplication does not induce an action of~$\Gamma$  on~$\akf$, but
$\Gamma$ acts by multiplication on the orbit space $X := \akf /\opsbar$,
which is a  totally disconnected, second countable, locally compact Hausdorff space.
This gives rise to the transformation groupoid
\[
\Gamma \times X = \kps/\ops \times \akf/\opsbar,
\]
and we will be interested in its reduction by the subset $Y : = \hatok/\opsbar$, which is the spectrum of $C^*(e_r: r\in \kk)$. The $C^*$-algebra of the groupoid $\kps/\ops \boxtimes \hatok/\opsbar$ coincides with the corner
$$
p_{\hatok/\opsbar} \left(\kps/\ops \ltimes C_0(\akf/\opsbar)\right) p_{\hatok/\opsbar},
$$
where $p_{\hatok/\opsbar}$ denotes the characteristic function of ${\hatok/\opsbar}$. By the dilation/extension results of~\cite{minautex}, this corner is also isomorphic to the semigroup crossed product $\opx/\ops \ltimes C(\hatok/\opsbar)$. The last $C^*$-algebra is isomorphic to
$$
\opx/\ops\,{}_\alpha\!\!\ltimes C^*(e_r: r\in \kk)\cong\heck.
$$

It will be useful to dig a little deeper into the construction of the above isomorphisms in order to identify explicitly the images of the
canonical generators of the Hecke algebra.

\begin{proposition} \label{prop:iso}
Let  $\chi:\akf \to \T$ be a character implementing a self-duality of $\akf$ in which the annihilator of $\hatok$ equals~$\hatok$.
There is an isomorphism
\begin{equation}\label{eq:FK3}
C_r^*(\Pk, \Po)\cong   p_{\hatok/\overline{\OO_+^*}}\bigl(K_+^*/\OO_+^*\ltimes C_0(\akf/\overline{\OO_+^*}) \bigr)p_{\hatok/\overline{\OO_+^*}} = C^*(\kps/\ops\, \boxtimes \,\hatok/\opsbar)
\end{equation}
determined by
\[
%\left[\begin{pmatrix}1 & 0\\ 0 & x\end{pmatrix}\right] =
 \mu_x \mapsto p_{\hatok/\overline{\OO_+^*}}\lambda_{(x)} p_{\hatok/\overline{\OO_+^*}}, \qquad x\in \opx,
\]
 and
 \[
 %\left[\begin{pmatrix}1 & y\\ 0 & 1\end{pmatrix}\right] =
 e_y \mapsto \frac{1}{R(y)}\int_{\overline{\OO_+^*}}\chi(\cdot\,uy)du\in C(\hat\OO/\overline{\OO_+^*}), \qquad y \in K.
\]
\end{proposition}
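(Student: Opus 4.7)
The plan is to obtain the isomorphism by composing three maps, each of which is already essentially present in the discussion preceding the statement: (a) the identification of $\heck$ with the semigroup crossed product $\opx/\ops \ltimes_\alpha C^*(e_r: r\in K)$ coming from the covariance relation \eqref{covariance}; (b) the identification of the commutative coefficient algebra $C^*(e_r: r\in K)$ with $C(\hatok/\opsbar)$ via Pontryagin duality with $\chi$; and (c) the dilation of this semigroup crossed product to the corner $p_{\hatok/\opsbar}(\kps/\ops \ltimes C_0(\akf/\opsbar))p_{\hatok/\opsbar}$ through the minimal automorphic extension of \cite{minautex}. The remaining work is to verify that under this composition the generators $\mu_x$ and $e_y$ are sent to the stated elements.

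For step (b), observe that $C^*(e_r: r\in K)$ is the universal $C^*$-algebra for the relations \eqref{tetafirst}--\eqref{tetalast}, so I would exhibit a family of elements in $C(\hatok/\opsbar)$ satisfying the same relations and invoke universality. Using the character $\chi$, the Pontryagin dual of $\kmo$ is $\hatok$, and the unitary generator $u_y\in C^*(\kmo)$ corresponding to $y + \OO$ (for $y\in K$) transforms to the function $a\mapsto \chi(ay)$ on $\hatok$. The $\ops$-orbit average of $u_y$, which by the definition of $R(y)$ is $\frac{1}{R(y)}\sum_{w\in \ops/\stabop{y}} u_{wy}$, is exactly the image of $e_y$ under the natural quotient map; under Pontryagin duality this translates to $a\mapsto \frac{1}{R(y)}\int_{\opsbar} \chi(auy)\,du$, where the integral is with respect to the normalized Haar measure on $\opsbar$. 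A short check confirms that the resulting family satisfies \eqref{tetamid}--\eqref{tetalast} (using that $\opsbar$ is the closure of $\ops$ in $\hatok^*$ and that $\chi$ restricted to $\hatok$ has $\hatok$ as annihilator), and that the map factors through $C(\hatok/\opsbar)$.

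For step (c), the semigroup $\opx/\ops$ embeds into the enveloping group $\kps/\ops$, and the endomorphism action $\alpha$ extends to an action of $\kps/\ops$ by partial automorphisms of $C_0(\akf/\opsbar)$: indeed, one simply dilates the spectrum by adjoining the negative powers $(1/a)\hatok$ for $a\in \opx$, yielding $\akf$ as the inductive limit. The minimal automorphic extension theorem of \cite{minautex} then identifies $\opx/\ops \ltimes C(\hatok/\opsbar)$ with the corner of $\kps/\ops \ltimes C_0(\akf/\opsbar)$ cut by the characteristic function $p_{\hatok/\opsbar}$. Under this identification, the isometric generator $v_a$ implementing $\alpha_a$ becomes $p_{\hatok/\opsbar} \lambda_{(a)} p_{\hatok/\opsbar}$, where $\lambda_{(a)}$ is the unitary in the multiplier algebra corresponding to the class of $a$ in $\kps/\ops$ (equivalently, to the principal ideal $(a)$). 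Since $\mu_a = v_a$ under isomorphism (a), this provides the image of $\mu_x$ stated in the proposition.

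The main obstacle, and the main source of bookkeeping, is to pin down the Pontryagin duality in step (b) with the correct normalizations so that the explicit formula $e_y \mapsto \frac{1}{R(y)}\int_{\opsbar}\chi(\cdot\, uy)\,du$ comes out with the right averaging constant and depends only on the $\ops$-orbit of $y+\OO$ rather than on $y$ itself. This requires keeping track of how the finite $\ops$-orbit sum $\sum_{w\in \ops/\stabop{y}}$ is compensated by integration over the compact group $\opsbar$ using its normalized Haar measure, and checking that the cardinality of $\ops/\stabop{y}$ (which equals $R(y)$) matches the $\opsbar$-stabilizer index so that no spurious constant appears. Once these normalizations are settled, the rest of the proof amounts to observing that the three isomorphisms compose consistently on the distinguished generators, which is routine by inspection of the relations listed in \proref{Heckepresentation}.
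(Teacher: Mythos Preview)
Your proposal is correct and follows essentially the same route as the paper's own proof. The paper is more terse---it cites \cite[Proposition~2.2]{LNT} and \cite[Lemma~2.3]{llnfinpart} for the crossed-product identification and the image of~$\mu_x$, and then tracks $e_y$ through the Fourier transform $C^*(\kmo)\cong C(\hat\OO)$ given by $\lambda_{y+\OO}\mapsto\chi(\cdot\,y)$ followed by averaging over~$\opsbar$---but your steps (a), (b), (c) are exactly this chain of isomorphisms, spelled out in more detail and with the normalization bookkeeping made explicit.
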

\bp
We refer to the proof of \cite[Proposition 2.2]{LNT}. By the arguments there and~\cite[Lemma~2.3]{llnfinpart}, the elements  $\mu_x$ for $x\in \opx$ are  mapped into
$p_{\hatok/\overline{\OO_+^*}}\lambda_{(x)} p_{\hatok/\overline{\OO_+^*}}$. On the other hand, the images of the elements  $R(y)e_y$ for $y\in K$ can be computed by first mapping $y$ into the corresponding generator of $C^*(\kmo)=C^*(\akf/\hat\OO)$, then using an isomorphism $C^*(\kmo)\cong C(\hat\OO)$, and finally by averaging the result by the action of $\overline{\OO_+^*}\subset\ohs$. The isomorphism $C^*(\kmo)\cong C(\hat\OO)$ depends on the choice of $\chi$ and maps the generator $\lambda_{y+\OO}$ of $C^*(\kmo)$ into the function $\chi(\cdot\, y)$.
\ep

\begin{remark}
Note that the integrals $\int_{\overline{\OO_+^*}}\chi(\cdot\,uy)du$ are averages over finite quotients of~$\overline{\OO_+^*}$, so the image of~$R(y)e_y$ is a normalized sum of finitely many characters of $\hatok$ of the form $\chi(\cdot uy)$, $u\in\overline{\OO_+^*}$.
\end{remark}

The dynamics $\sigma$ on $\heck$ corresponds to the dynamics on $C^*(\kps/\ops\, \boxtimes \,\hatok/\opsbar)$ defined by the homomorphism $N_K\colon \kps/\ops\to(0,+\infty)$. Our next goal is to understand the boundary set of the associated cocycle on $\kps/\ops\, \boxtimes \,\hatok/\opsbar$.

Each prime ideal $\pid$ determines a discrete valuation $v= v_\pid$ on $K$, defined to be the integer-valued function
 on the set of integral ideals giving the largest power of $\pid$ that divides a given ideal. This can be extended to a valuation defined on $\hatok$ and on $\akf$ using a uniformizing element $\pi_\pid$ in place of the prime ideal $\pid$, and allowing finitely many finite negative powers. Clearly, discrete valuations factor through the quotient $\hatok/\opsbar$.
Let $\mko$ denote the set of all such discrete valuations. For each integral ideal $\aid$ define
 \[
\Omega_\aid :=  \{\omega \in \hatok/\opsbar : v(\omega) = v(\aid) \text{ for all } v\in \mko \}.
\]
These are mutually disjoint sets such that $\sqcup_\aid \Omega_\aid = (\jkf \cap \hatok)/\opsbar$.

Denote by $\clkp$ the narrow class group of $K$, the quotient of the group of fractional ideals by the subgroup of principal fractional ideals with a totally positive generator.

   \begin{proposition}  \label{disjointcomponents}
For each narrow ideal class $c\in \clkp$, let
$\aid_{c,1}, \aid_{c,2}, \ldots, \aid_{c,k_c}$ be the integral ideals
in $c$ that have minimal norm among all integral ideals in $c$. Then the boundary set $Y_0$ of the
cocycle determined by $N_K$ on $\kps/\ops\boxtimes \hatok/\opsbar$ decomposes as  the disjoint union
\[
Y_0:=(\hatok/{\opsbar})\setminus \bigcup_{x\in K_+^*: \  N_K(x)>1}( x\hatok/\opsbar)
=\bigsqcup_{ c\in \clkp} \, \bigsqcup_{1\leq j \leq k_c}  \Omega_{\aid_{c,j}}.
\]
 \end{proposition}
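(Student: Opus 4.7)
The strategy is to reformulate the condition defining $Y_0$ as a divisibility condition on integral ideals, and then to identify it with minimality of the content $\aid(\omega)$ in its narrow class. Preliminarily, since $\opsbar \subset \ohs$ consists of units, every valuation $v \in \mko$ vanishes on $\opsbar$ and so descends to $\hatok/\opsbar$, and a short computation shows that for $x \in \kps$ one has $[\omega] \in x\hatok/\opsbar$ if and only if $v(x) \leq v(\omega)$ for all $v \in \mko$.

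I would next verify that $Y_0 \subset (\jkf \cap \hatok)/\opsbar$, so that the content $\aid(\omega) = \prod_\pid \pid^{v(\omega)}$ is a well-defined integral ideal for every class in $Y_0$. The potentially obstructive cases are (a) $v_\pid(\omega) = \infty$ for some prime $\pid$ and (b) the set $A := \{\pid : v_\pid(\omega) > 0\}$ is infinite. In case (a), letting $n$ denote the order of $[\pid]$ in $\clkp$, the ideal $\pid^n$ is totally positive principal, hence $\pid^n = (x)$ for some $x \in \op^\times$ with $v(x) \leq v(\omega)$ and $N_K(x) > 1$. In case (b), the pigeonhole principle applied to the finite group $\clkp$ yields finitely many primes $\pid_1,\ldots,\pid_n \in A$ whose classes multiply to the identity of $\clkp$, and then $\prod_i \pid_i = (x)$ for some $x \in \op^\times$ with the same property. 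In either case, $[\omega] \notin Y_0$.

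The central step is to prove, for $\omega \in \jkf \cap \hatok$ with integral content $\aid := \aid(\omega)$, that $[\omega] \in Y_0$ if and only if $\aid$ has minimal norm in its narrow class. To this end I would parametrize the $x \in \kps$ satisfying $v(x) \leq v(\omega)$ by coprime pairs of integral ideals: write $(x) = \aid_+ \aid_-^{-1}$ with $\aid_+ \mid \aid$ and $\gcd(\aid_+,\aid_-) = \oo$; the conditions $x \in \kps$ and $N_K(x) > 1$ translate into $[\aid_+] = [\aid_-]$ in $\clkp$ and $N_K(\aid_+) > N_K(\aid_-)$ respectively. A gcd manipulation then shows that such a pair exists precisely when $[\aid]$ contains an integral ideal of norm strictly less than $N_K(\aid)$: from any such $\aid'$, put $\mathfrak{d} := \aid + \aid'$, $\aid_+ := \aid/\mathfrak{d}$ and $\aid_- := \aid'/\mathfrak{d}$; conversely, from $(\aid_+, \aid_-)$ the ideal $(\aid/\aid_+) \cdot \aid_-$ lies in $[\aid]$ and has smaller norm than $\aid$. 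Combined with the preliminaries and with the obvious disjointness of the sets $\Omega_\aid$, this yields the stated decomposition.

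The conceptual obstacle lies in this third step: since $Y_0$ is defined using arbitrary $x \in \kps$ (not just totally positive integers), one must account for the non-integral case, in which $(x)$ has negative valuations at primes outside the support of $\aid$. Restricting to $x \in \op^\times$ alone would force $\aid_+$ to be a totally positive principal divisor of $\aid$, a strictly stronger condition that is not equivalent to minimality of $N_K(\aid)$ in $[\aid]$; the coprime factorization $(x) = \aid_+\aid_-^{-1}$ together with the gcd manipulation is what bridges the gap.
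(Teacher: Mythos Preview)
Your proof is correct and follows essentially the same route as the paper: both first show (via pigeonhole on $\clkp$) that points of $Y_0$ have finite, finitely-supported valuation vectors, and then identify membership in $Y_0$ with minimality of the content ideal in its narrow class. Your coprime factorization $(x)=\aid_+\aid_-^{-1}$ is exactly the paper's $[1,x]$ and $[x^{-1},1]$ in different notation.

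One small simplification worth noting: in the direction ``$\aid$ not minimal $\Rightarrow \omega\notin Y_0$'', the paper avoids your gcd manipulation entirely. Given any integral $\bid$ in the narrow class of $\aid$, one simply takes $x\in\kps$ with $\aid=x\bid$; then $v(x^{-1}\omega)=v(\bid)\ge 0$ for all $v$, so $\omega\in xY$ directly, and $\omega\in Y_0$ forces $N_K(x)\le 1$, i.e.\ $N_K(\aid)\le N_K(\bid)$. The ``conceptual obstacle'' you flag (non-integral $x$) is thus handled automatically: there is no need to reduce $(x)$ to a coprime fraction, because the valuation check $v(x^{-1}\omega)\ge 0$ is immediate from $v(\omega)=v(\aid)$ and $v(x)=v(\aid)-v(\bid)$. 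Your construction of $\aid_\pm$ from $\mathfrak d=\aid+\aid'$ simply recovers this same $x$ after cancelling the common factor.
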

 %Note that $ \oo$ is the only ideal of minimal norm in the trivial narrow class $1\in \clkp$, so $k_1 = 1$.
 \begin{proof}
Recall that we denote $\hatok/\opsbar$ by $Y$. For $x,y\in K^*$, denote by $[x,y]$ the lcm of the fractional ideals $(x)$ and $(y)$.

Let $c$ be a narrow class and let $\omega \in \Omega_{\aid_{c,j}}$ for some $1 \leq j \leq k_c$. If
$\omega \in Y\cap x Y$ for some $x \in \kps$, then $\omega \in \Omega_{\bid [1,x]}$
 for some integral ideal $\bid$, and since the sets $\Omega_{\aid}$ are disjoint,
 we have  $\bid [1,x] = \aid_{c,j}$. But then
 $\aid_{c,j} x\inv= \bid [1,x]  x\inv = \bid [x\inv,1] $
 is an integral ideal in the same narrow class as  $\aid_{c,j}$, so $N_K(x) \leq 1$.
 Hence $\omega \notin x  Y $ for every $x$ with $N_K(x) > 1$, and thus $\omega \in Y_0$.

 Conversely, suppose now $\omega \in Y_0$. We claim that the valuation vector of $\omega$
 has finite support and finite values. Otherwise the  \emph{superideal} corresponding to those valuations
would have infinitely many prime ideal factors (counted with multiplicity) in the same narrow class.
The product of any subcollection of these with cardinality the narrow class number
would produce a totally positive $a \in \opx$ such that $\omega \in a Y $, which would
contradict the assumption that $\omega \in Y_0$ because $N_K(a) >1$. This proves the claim and implies that
 $\omega \in \Omega_\aid$ for an integral ideal $\aid$.

Let $c$ be the narrow class of $\aid$ and
 suppose $\bid $ is an integral ideal in $c$. Then
there exists $x\in \kps$ such that  $\aid = x \bid$, so
we have $\omega \in x Y$. By assumption, $\omega \in Y_0$,
so $N_K(x) \leq1$. This implies that $\aid$ is of minimal norm in its class, that is, $\aid = \aid_{c,j}$ for some $j = 1,\ldots, k_c$. \end{proof}

\begin{corollary}\label{idealratios}
In the context of \proref{disjointcomponents}, the subset $S_0\subset \Gamma$
from \lemref{Sdefinition} is a finite subset $(\kps/\ops)_0$ of $\kpsone/\ops$, where $\kpsone$ is the group of totally positive elements of norm one.
This set consists of principal ideals factoring as the ratio of two ideals of minimal norm
in the same narrow class:
\[
(\kps/\ops)_0 = \{\aid_{c,i}^{} \aid_{c,j}\inv : c\in \clkp \ \  i,j = 1, 2, \ldots, k_c\}.
\]
\end{corollary}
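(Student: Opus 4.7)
The plan is to combine the decomposition of $Y_0$ from \proref{disjointcomponents} with a direct analysis of how $\gamma = x\ops \in \Gamma = \kps/\ops$ acts on the pieces $\Omega_\aid$, using the fact (from \lemref{Sdefinition}) that $S_0 \subset \ker N$.

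The main geometric observation I would extract first is the following: for $x \in \kps$ and $\omega \in \Omega_\aid$, one has $x\omega \in Y = \hatok/\opsbar$ if and only if the fractional ideal $(x)\aid$ is integral, and in that case $x\omega \in \Omega_{(x)\aid}$, because valuations are additive and factor through $\opsbar$. Since the sets $\Omega_\bid$, as $\bid$ ranges over integral ideals, are pairwise disjoint, combining this with $Y_0 = \bigsqcup_{c\in\clkp}\bigsqcup_{1\le j\le k_c}\Omega_{\aid_{c,j}}$ gives: $xY_0 \cap Y_0 \neq \emptyset$ if and only if there exist $c, c' \in \clkp$ and indices $i, j$ such that $(x)\aid_{c,i} = \aid_{c',j}$.

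Next I would use the defining property of the narrow class group: $(x)$ is a principal fractional ideal with totally positive generator, hence trivial in $\clkp$. Consequently any relation $(x)\aid_{c,i} = \aid_{c',j}$ forces $c = c'$, so $(x) = \aid_{c,j}\aid_{c,i}^{-1}$ for some $c$ and $1 \le i, j \le k_c$. Taking norms, $N_K(x) = N_K(\aid_{c,j})/N_K(\aid_{c,i}) = 1$ since both ideals realize the minimum norm in $c$; thus $x \in \kpsone$ and $S_0 \subset \kpsone/\ops$. Conversely, for any triple $(c, i, j)$, the ratio $\aid_{c,j}\aid_{c,i}^{-1}$ lies in the trivial narrow class, so admits a totally positive generator $x$ (unique modulo $\ops$), and for any $\omega \in \Omega_{\aid_{c,i}} \subset Y_0$ we get $x\omega \in \Omega_{\aid_{c,j}} \subset Y_0$, showing $x\ops \in S_0$. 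This establishes the claimed parametrization.

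Finiteness is then immediate because $|\clkp|$ is the narrow class number, which is finite, and each $k_c$ is finite by definition (only finitely many integral ideals of a given bounded norm exist). The one point requiring a bit of care is confirming that the description of $S_0$ is well-defined at the level of $\Gamma = \kps/\ops$, i.e., that distinct cosets $x\ops$ correspond to distinct principal fractional ideals $(x)$; this is immediate since $\ops$ consists exactly of the totally positive units, so $x \mapsto (x)$ factors through the quotient $\kps/\ops$ injectively into the group of principal fractional ideals with totally positive generator. I do not anticipate a genuine obstacle here: the result is essentially a bookkeeping consequence of \proref{disjointcomponents} together with the trivial-narrow-class reasoning.
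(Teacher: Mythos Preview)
Your proposal is correct and follows essentially the same route as the paper: both use the decomposition of $Y_0$ from \proref{disjointcomponents} to see that if $\omega\in\Omega_{\aid_{c,j}}$ and $x\omega\in\Omega_{\aid_{d,i}}$ then $(x)\aid_{c,j}=\aid_{d,i}$, force $c=d$ by triviality of $(x)$ in $\clkp$, and then check the converse by letting any ratio $\aid_{c,i}\aid_{c,j}^{-1}$ act on $\Omega_{\aid_{c,j}}$. Your write-up is just somewhat more explicit about the valuation computation, the norm-one conclusion, finiteness, and well-definedness at the level of $\kps/\ops$, all of which are implicit in the paper's shorter argument.
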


\bp
Suppose $\omega \in Y_0$ and $x \omega \in Y_0$ for some $x \in \kps$. By \proref{disjointcomponents},
$\omega \in \Omega_{\aid_{c,j}}$ and $x\omega \in \Omega_{\aid_{d,i}}$ for some $c,d\in \clkp$ and $i,j$. But then $x\aid_{c,j}=\aid_{d,i}$, so $c=d$ and $(x)=\aid_{c,i}\aid_{c,j}\inv$.

On the other hand, every $\gamma \in \kps/\ops$ that
factors as $\aid_{c,i}^{} \aid_{c,j}\inv$ maps elements of $\Omega_{\aid_{c,j}} \subset Y_0$ to elements
of $\Omega_{\aid_{c,i}} \subset Y_0$, so $\gamma\in (\kps/\ops)_0$.
\ep

\begin{corollary}\label{cor:boundaryalg}
In the context of \proref{disjointcomponents} the boundary groupoid $\kps/\ops \boxtimes Y_0$ has trivial isotropy and
$$
C^*(\kps/\ops \boxtimes Y_0)\cong\left(\bigoplus_{c\in\clkp}\Mat_{k_c}(\C)\right)\otimes C(\ohs/\opsbar).
$$
\end{corollary}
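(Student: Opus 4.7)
\bp
The plan is to use Corollary~\ref{idealratios} to reduce the boundary groupoid to $(\kps/\ops)_0\boxtimes Y_0$, where $(\kps/\ops)_0=\{\aid_{c,i}\aid_{c,j}^{-1}\mid c\in\clkp,\ 1\le i,j\le k_c\}$ is finite. For triviality of the isotropy, I would observe that if $\omega\in\Omega_{\aid_{c,j}}\subset Y_0$ and $\gamma\omega=\omega$ in $\hatok/\opsbar$ for some $\gamma\in\kps/\ops$, then comparing valuation vectors gives $(\gamma)\aid_{c,j}=\aid_{c,j}$, hence $(\gamma)=(1)$, so $\gamma\in\ops$ is trivial in $\kps/\ops$.

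Next, since each element of $(\kps/\ops)_0$ preserves the narrow class (by Corollary~\ref{idealratios}), the boundary groupoid splits as a disjoint union $\bigsqcup_{c\in\clkp}\mathcal G_c$, where $\mathcal G_c$ is the reduction to $\bigsqcup_{j=1}^{k_c}\Omega_{\aid_{c,j}}$. Consequently its $C^*$-algebra decomposes as $\bigoplus_c C^*(\mathcal G_c)$, and the task reduces to showing $C^*(\mathcal G_c)\cong\Mat_{k_c}(\C)\otimes C(\ohs/\opsbar)$ for every $c$.

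Fix $c$ and choose totally positive generators $x_{c,j}\in\kps$ of the principal fractional ideals $\aid_{c,j}\aid_{c,1}^{-1}$, with $x_{c,1}=1$; pick also a single idele $\omega_0\in\akf^*\cap\hatok$ with valuation vector $v(\aid_{c,1})$. Then $\Omega_{\aid_{c,j}}=x_{c,j}\omega_0\ohs/\opsbar$, since $v(x_{c,j}\omega_0)=v(\aid_{c,j})$, and the map
$$
\{1,\dots,k_c\}\times\ohs/\opsbar\longrightarrow\bigsqcup_{j=1}^{k_c}\Omega_{\aid_{c,j}},\qquad (j,u\opsbar)\longmapsto x_{c,j}\omega_0 u\opsbar,
$$
is a homeomorphism that intertwines the action of $\aid_{c,i}\aid_{c,j}^{-1}=(x_{c,i}x_{c,j}^{-1})$ with the arrow $(j,u\opsbar)\mapsto(i,u\opsbar)$, thanks to the identity $x_{c,i}x_{c,j}^{-1}\cdot x_{c,j}\omega_0 u=x_{c,i}\omega_0 u$.

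Under this identification $\mathcal G_c$ becomes the product of the pair groupoid on $\{1,\dots,k_c\}$ with the unit groupoid on $\ohs/\opsbar$, so $C^*(\mathcal G_c)\cong\Mat_{k_c}(\C)\otimes C(\ohs/\opsbar)$, and summing over $c$ yields the stated formula. The main delicate point is making the identifications $\Omega_{\aid_{c,j}}\cong\ohs/\opsbar$ compatible simultaneously for all $j$ and for the various actions of $\aid_{c,i}\aid_{c,j}^{-1}$; this is precisely what the common reference idele $\omega_0$ and the consistent choice of generators $x_{c,j}$ are designed to achieve.
\ep
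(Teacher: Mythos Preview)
Your proof is correct and follows essentially the same approach as the paper. Both arguments establish trivial isotropy by exploiting that points of $Y_0$ have finite valuation vectors (the paper phrases this as ``none of the elements in $Y_0$ have the valuation of a zero-divisor''), then identify each narrow-class component as a groupoid with $k_c$-element orbits over a space homeomorphic to $\ohs/\opsbar$; you make the groupoid isomorphism explicit via the reference idele $\omega_0$ and generators $x_{c,j}$, whereas the paper simply invokes the equivalence-relation picture and the homeomorphism $\Omega_{\aid_{c,1}}\cong\ohs/\opsbar$.
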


\bp
The groupoid $\kps/\ops \boxtimes Y_0$ has trivial isotropy, because none of the elements in $Y_0$
have the valuation of a zero-divisor. Therefore this groupoid is an equivalence relation on $Y_0$. By the proof of the previous corollary, the equivalence class of every point $\omega \in \Omega_{\aid_{c,j}}$ consists precisely of~$k_c$ elements, with one point in each $\Omega_{\aid_{c,i}}$. It follows that
$$
C^*(\kps/\ops \boxtimes Y_0)\cong\bigoplus_{c\in\clkp}\Mat_{k_c}(C(\Omega_{\aid_{c,1}}))\cong\left(\bigoplus_{c\in\clkp}\Mat_{k_c}(\C)\right)\otimes C(\ohs/\opsbar),
$$
where we used that the sets $\Omega_\aaa$ are all homeomorphic to $\ohs/\opsbar$.
\ep

\begin{theorem}\label{thm:gs-for-redHecke}
Let $K$ be an algebraic number field and consider the time evolution $ \sigma$ on  $C_r^*(\Pk, \Po)$ defined by~\eqref{eq:modular-function}.
Let
$$
Y_0:=(\hatok/{\opsbar})\setminus \bigcup_{x\in K_+^*: \   {N}_K(x)>1}( x\hatok/\opsbar)
$$
be the set described in Proposition~\ref{disjointcomponents}.
Then
\begin{enumerate}
\item there is an affine isomorphism of the ground state space of $(C_r^*(\Pk, \Po), {\sigma})$ onto the state space of $C^*(\kps/\ops\boxtimes Y_0)$;

\item for each $\beta\in(1,+\infty]$, there is an affine isomorphism of the simplex of $\sigma$-KMS$_\beta$-states on the $C^*$-algebra $C_r^*(\Pk, \Po)$ onto the simplex of Borel probability measures on $\clkp\times(\ohs/\opsbar)$.
\end{enumerate}
\end{theorem}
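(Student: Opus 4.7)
The plan is to invoke \proref{prop:iso} to identify $(\heck,\sigma)$ with $(C^*(\kps/\ops\boxtimes \hatok/\opsbar),\sigma^c)$ where $c(\gamma,\omega)=\log N_K(\gamma)$, and then apply the general results of \secref{groupoids} with $\Gamma=\kps/\ops$, $X=\akf/\opsbar$, $Y=\hatok/\opsbar$ and $N=N_K$. Part (1) follows almost immediately from \thmref{boundaryandgroundforc}: by \proref{disjointcomponents} the boundary set is $Y_0$, which is nonempty (it contains $\Omega_\OO$), and by \corref{idealratios} $S_0\subset\ker N_K$, so $(\ker N_K)\boxtimes Y_0 = \kps/\ops\boxtimes Y_0$. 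Part (1) of the theorem is then a restatement of \thmref{boundaryandgroundforc}(4).

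For part (2) I would apply \thmref{KMSandgroundforY}, which requires three verifications. The first is that the abscissa of convergence $\beta_0$ of $\zeta_S$ in \eqref{eq:critical-beta} satisfies $\beta_0\le1$: every element of $S$ is of the form $[x]$ with $(x)=\bid\aid_{c,j}\inv$ for some narrow class $c\in\clkp$, some integral ideal $\bid$ in class $c$, and some $j\in\{1,\dots,k_c\}$; since $N_K(x)=N(\bid)/N(\aid_{c,j})\ge1$, a crude bound
\[
\zeta_S(\beta) \;\le\; \sum_{c\in\clkp} k_c\, N(\aid_{c,1})^{\beta}\,\zeta_K(\beta)
\]
in terms of the Dedekind zeta function gives $\beta_0\le 1$. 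The second is to exhibit Borel sets $Y_n$ and elements $\gamma_n$ satisfying conditions (i)--(ii) of \thmref{KMSandgroundforY}: enumerate the primes $\pid_1,\pid_2,\dots$ of $\OO$, take $Y_n=\{\omega\in Y : v_{\pid_n}(\omega)=+\infty\}$ (closed in $Y$, hence Borel), let $d_n$ be the order of $[\pid_n]$ in $\clkp$, and let $\gamma_n=[x_n]$ for $x_n$ a totally positive generator of $\pid_n^{d_n}$; then $N(\gamma_n)=N(\pid_n)^{d_n}>1$ and multiplication by $x_n$ preserves the condition $v_{\pid_n}=+\infty$, so $\gamma_n Y_n = Y_n$.

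The third verification is condition (i) of \thmref{KMSandgroundforY}, which I would reduce to showing the inclusion $SY_0\supseteq(\jkf\cap\hatok)/\opsbar=\bigsqcup_\aid\Omega_\aid$: given an integral ideal $\aid$ of narrow class $c$ and any $\aid_{c,j}$ in that class, the ratio $\aid\aid_{c,j}\inv$ is principal with a totally positive generator $s$, and then $s\inv\Omega_\aid=\Omega_{\aid_{c,j}}\subseteq Y_0$, so $\Omega_\aid\subseteq sY_0\subseteq SY_0$. Consequently, for every open $U\supseteq Y_0$,
\[
Y\setminus SU \;\subseteq\; Y\setminus SY_0 \;\subseteq\; Y\setminus \bigsqcup_{\aid}\Omega_\aid \;=\; \bigcup_{n\ge1} Y_n.
\]

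Then \thmref{KMSandgroundforY} provides, for every $\beta\in(1,+\infty]$, an affine isomorphism between the $\sigma$-KMS$_\beta$-states on $\heck$ and the tracial states on $C^*(\kps/\ops\boxtimes Y_0)$. To conclude, I invoke \corref{cor:boundaryalg} to identify this boundary algebra with $\bigl(\bigoplus_{c\in\clkp}\Mat_{k_c}(\C)\bigr)\otimes C(\ohs/\opsbar)$; since each matrix algebra admits a unique tracial state, the tracial states of this direct sum are in natural affine bijection with Borel probability measures on $\clkp\times(\ohs/\opsbar)$. The main obstacle in the argument is the verification of condition (i) of \thmref{KMSandgroundforY}, which depends on the explicit geometric identification of $SY_0$ with the set of orbits of integral adeles; the other steps are either bookkeeping or follow directly from the groupoid machinery of \secref{groupoids}.
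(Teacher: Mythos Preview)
Your proof is correct and follows the same overall route as the paper: transport the problem to the groupoid $C^*$-algebra via \proref{prop:iso}, then apply \thmref{boundaryandgroundforc} for part~(1) and \thmref{KMSandgroundforY} together with \corref{cor:boundaryalg} for part~(2). The bound on the abscissa of convergence and the choice of the sets $Y_n$ with $\gamma_n$ a totally positive generator of $\pid_n^{d_n}$ are exactly what the paper does.

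There is one genuine simplification in your argument, in the verification of condition~(i) of \thmref{KMSandgroundforY}. You observe directly that $SY_0\supseteq\bigsqcup_\aid\Omega_\aid$, since for any integral $\aid$ in class $c$ the element $s$ with $(s)=\aid\,\aid_{c,j}^{-1}$ lies in $S$ and carries $\Omega_{\aid_{c,j}}\subset Y_0$ onto $\Omega_\aid$; hence $Y\setminus SU\subset Y\setminus SY_0\subset\bigcup_n Y_n$ for \emph{every} open $U\supseteq Y_0$, uniformly. The paper instead reduces to basic neighborhoods $U_F$ indexed by finite sets $F$ of valuations and, for a given $\omega\notin\bigcup_v Y_v$, constructs an $x\in S$ (depending on $F$) with $x^{-1}\omega\in U_F$; in that argument $x^{-1}\omega$ need not land in $Y_0$ itself. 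Your version is shorter and establishes the stronger statement that every non-zero-divisor point of $Y$ already lies in $SY_0$, which makes the role of the zero-divisor sets $Y_n$ completely transparent.
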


\begin{proof}
Part (1) is an immediate consequence of the isomorphism $C_r^*(\Pk, \Po)\cong C^*(\kps/\ops\boxtimes Y)$ from~\eqref{eq:FK3}, where $Y=\hatok/\opsbar$, and \thmref{boundaryandgroundforc}.

Part (2) for $\beta\in(1,\infty)$ is already proved in \cite[Theorem~2.5]{LNT} (where the interval $(0,1]$ is also dealt with) using the classification of KMS-states on the full Bost-Connes system associated with~$K$. Here we will give a more direct argument based on Theorem~\ref{KMSandgroundforY} and also consider the case $\beta=\infty$.

The first step is to characterize the set $S\subset\Gamma=\kps/\ops$ from \lemref{Sdefinition} and to determine the region of convergence of its zeta function $\zeta_S(\beta)$. For this, suppose $\gamma \in S$; then there exists $\omega_0 \in Y_0$ such that
 $\gamma \omega_0 = \omega \in Y$.
By \proref{disjointcomponents}, $\omega_0 \in \Omega_{\aid_{c,j}}$ for some narrow class $c$ and some $j = 1, 2, \ldots, k_c$, so we have
$v(\omega_0) = v(\aid_{c,j})$ and $v(\gamma) = v(\omega) - v(\aid_{c,j}) \geq - v(\aid_{c,j})$.  Choose $d\in \opx$ such that $(d) \subset \cap_{c,j} \aid_{c,j}$, so that $v(d) \geq v(\aid_{c,j})$ for all $v\in \mko$, proving that $S \subset (\frac{1}{d}\opx/\ops)$. As a consequence $\zeta_S(\beta)$ has the same abscissa of convergence as the  Dedekind zeta function of $K$.

The next step is to come up with a countable collection of Borel subsets of $Y$ and associated group elements satisfying properties (i) and (ii) of \thmref{KMSandgroundforY}. For each $v\in \mko$,  let
$$Y_v:=\{\omega \in Y: v(\omega) = \infty\},$$
and let $x_v$ be a totally positive element whose principal ideal is a power of the prime ideal $\pid$ corresponding to the discrete valuation $v$, e.g., let $h_+$ denote the narrow class number of $\kk$ and choose $x_v$ to be a totally positive generator of the principal ideal $\pid^{h_+}$. Notice that $\bigcup_{v} Y_v$ is the image in $Y$ of the set of zero divisors in $\hatok$. We clearly have $  N_K(x_v) >1$ and  $x_v Y_v = Y_v$ for every $v\in \mko$, so property (ii) in \thmref{KMSandgroundforY} is satisfied.

It remains to prove that the collection $\{Y_v\}_v$ also has  property (i), that is,
for every open set $U$ containing $Y_0$ and every $\omega \in Y\setminus \cup^\infty_{n=1}Y_n$ there exists $\gamma \in S$ such that $\omega \in \gamma U$.
Suppose $U$ is a basic open set containing $Y_0$; we may assume that  $U$ is of the form
\[
U_F = \bigcup_{(c,j)}  U_{c,j} = \bigcup_{(c,j)} \{\omega \in Y:
 v(\omega) = v(\aid_{c,j})   \text{ for } v\in F \},
\]
where $F$ is a finite subset of $\mko$ containing every valuation that does not vanish on $\aid_{c,j}$ for some~$c$ and some $j = 1,2, \ldots, k_c$.

Let $\omega \in Y\setminus \bigcup_{v\in \mko} Y_v$. Then  $e_v := v(\omega) < \infty$
for each $v\in F$, so   $\bid := \prod_{v\in F} \pid_v^{e_v}$ is an integral ideal.
Let $\aid_{c,1}$ be an ideal of minimal norm in the narrow class of $\bid$, so there exists a totally positive element $x$ such that  $(x) \aid_{c,1} = \bid$.
By assumption $\aid_{c,1}$ is supported in $F$, and so is $\bid$, hence $(x)$ is supported in $F$.
For any $\omega_0\in \Omega_{\aid_{c,1}} \subset Y_0$ we have  $x\omega_0 \in \Omega_{\bid} \subset Y$, so $(x) \in S$.
We also have $v( x\inv \omega) = v(\omega) \geq 0$ for $v\in \mko \setminus F$, because the support of $(x)$ is contained in $F$, and $v( x\inv \omega) = v(\aid_{c,1}) - v(\bid) + v(\omega) = v(\aid_{c,1}) $ for $v\in F$.
This shows that $x\inv \omega \in U_F$, finishing the proof of property (i) in \thmref{KMSandgroundforY}.

By \thmref{KMSandgroundforY} we conclude that the KMS$_\beta$-states of $\heck$ for $\beta\in(1,+\infty]$
are para\-metrized by the tracial states on the groupoid $C^*$-algebra of $\kps/\ops\boxtimes Y_0$. By Corollary~\ref{cor:boundaryalg} the simplex of such tracial states is affinely homeomorphic to the simplex of probability measures on $\clkp\times(\ohs/\opsbar)$.
\end{proof}

\begin{corollary}
The extremal KMS$_\beta$-states of $(\heck,\sigma)$ for $\beta\in(1,+\infty]$ are parametrized by $\clkp\times(\ohs/\opsbar)$.
\end{corollary}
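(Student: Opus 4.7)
The plan is to deduce this corollary directly from part (2) of \thmref{thm:gs-for-redHecke} by invoking the standard identification of extreme points in the simplex of probability measures.

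First, I would observe that an affine isomorphism between Choquet simplices preserves extreme points, so it suffices to describe the extremal Borel probability measures on the compact Hausdorff space $\clkp \times (\ohs/\opsbar)$. The narrow class group $\clkp$ is finite, and $\ohs/\opsbar$ is compact, hence the product is a compact Hausdorff space and the set of Borel probability measures on it (with the weak$^*$ topology) is a Bauer simplex.

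Next, I would recall the classical fact that the extreme points of the simplex of Borel probability measures on a compact Hausdorff space~$X$ are precisely the Dirac point masses $\delta_x$, $x\in X$. This gives a canonical affine bijection between $X$ and the extreme boundary of the simplex. Applying this to $X=\clkp \times (\ohs/\opsbar)$ and composing with the affine isomorphism of \thmref{thm:gs-for-redHecke}(2), we obtain the desired parametrization of the extremal $\sigma$-KMS$_\beta$-states of $\heck$ by $\clkp\times(\ohs/\opsbar)$ for every $\beta\in(1,+\infty]$.

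There is no serious obstacle here; this is a one-paragraph consequence of the theorem together with the well-known description of the extreme boundary of the space of probability measures on a compact space. The only point worth noting is that the parametrization is uniform in $\beta\in(1,+\infty]$, including the case $\beta=\infty$, since the affine isomorphism in \thmref{thm:gs-for-redHecke}(2) is already stated for that range.
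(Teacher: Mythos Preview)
Your proposal is correct and is precisely the intended argument: the paper states this corollary without proof, treating it as an immediate consequence of \thmref{thm:gs-for-redHecke}(2) together with the standard fact that the extreme points of the simplex of Borel probability measures on a compact Hausdorff space are the Dirac masses. There is nothing to add.
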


We point out that by class field theory the set $\clkp \times
(\ohs/\opsbar)$ supports a free transitive action of the Galois
group $\gal(\kab/\kk)$ of the maximal abelian extension $\kab$ of
$K$, but, unlike the situation for Bost-Connes type systems, this
does not seem to arise from an action of $\gal(\kab/\kk)$ as
symmetries of $\heck$.

%%%%%%%%%%%%%%%%%%%%%%%%%%%%%%%%%%%%%%%%%%%%%%%%%%%%%%%%
%%%%%%%%%%%%%%%%%%%%%%%%%%%%%%%%%%%%%%%%%%%%%%%%%%%%%%%%
%%%%%%%%%%%%%%%%%%%%%%%%%%%%%%%%%%%%%%%%%%%%%%%%%%%%%%%%
%%%%%%%%%%%%%%%%%%%%%%%%%%%%%%%%%%%%%%%%%%%%%%%%%%%%%%%%
%%%%%%%%%%%%%%%%%%%%%%%%%%%%%%%%%%%%%%%%%%%%%%%%%%%%%%%%
\section{Arithmetic subalgebra and fabulous states}\label{arithmetic}
In addition to the time evolution $\sigma$, the $C^*$-algebra $\heck$
carries several other natural actions.

First, using the presentation of $\heck$ as a semigroup crossed
product $$\opx/\ops\; {}_\alpha\!\!\ltimes C^*(e_r: r\in \kk),$$ we
have the dual action $\hat\alpha$ of the Pontryagin dual of $K^*_+$.
At the level of generators it is given by
\begin{equation*}\label{edual}
\hat\alpha(\eta)\left(\left[\po\begin{pmatrix}1 & y\\ 0 &
x\end{pmatrix}\po\right]\right)=\eta(x)\left[\po\begin{pmatrix}1 & y\\
0 & x\end{pmatrix}\po\right]
\end{equation*}
for characters $\eta$ of $K^*_+$. In order to not confuse Pontryagin
duals with adic completions, we will think of $\hat\alpha$ as a
coaction of $K^*_+$ and call it the \emph{dual coaction}.

Second, the group $\ohs$ acts by the automorphisms $\tau(u)$ defined
by
\begin{equation}\label{esymm}
\tau(u)\left(\left[\po\begin{pmatrix}1 & y\\ 0 &
x\end{pmatrix}\po\right]\right)=\left[\po\begin{pmatrix}1 & u^{-1}y\\
0 & x\end{pmatrix}\po\right],
\end{equation}
where the product $u\inv y$ is defined by approximating $u\inv$ in $\hatok$ by elements
$v$ in $\OO$ and using these elements in the above formula instead
of $u^{-1}$, the result being independent of $v$ sufficiently close
to~$u^{-1}$. Since $\ohs$ is abelian, we could of course have
used~$u$ instead of $u^{-1}$ in the above definition, but our choice
is preferable for consistency with the full Bost-Connes system, as
we will see later.

The action $\tau$ factors through the group
$\ohs/\overline{\OO_+^*}$, which has a class field theoretic
interpretation. Namely, we have a homomorphism $\ohs\to\gal(\kab/K)$
obtained by restricting the Artin map
$r_K\colon\ak^*\to\gal(\kab/K)$ to $\ohs$. Using basic properties of
the Artin map it is easy to check that the kernel of this restriction
 is $\overline{\OO_+^*}$ and that its image is
$\gal(\kab/H_+(K))$, where $H_+(K)$ is the maximal abelian extension
of $K$ that is unramified at all finite places, see,
e.g.,~\cite[Proposition~1.1]{LNT}.

Motivated by~\cite{con-mar} we now give the following definition.

\begin{definition}
Let $\AAA$ be an $\ohs$-invariant $K$-subalgebra of $C_r^*(\Pk, \Po)$. We say that a state $\phi$ on $C_r^*(\Pk, \Po)$ is \emph{fabulous} with respect to $\AAA$ if
$\kab$ is generated by $\phi(\AAA)$ as a $K$-algebra and
$$
\phi(\tau(u)(a))=r_K(u^{-1})(\phi(a))
$$
for all $u\in\ohs$ and $a\in\AAA$. The algebra $\AAA$ is called an \emph{arithmetic subalgebra} if there is a fabulous state with respect to $\AAA$ and the $\C$-algebra generated by $\AAA$ in $C_r^*(\Pk, \Po)$ is dense.
\end{definition}

\begin{remark}
Since $r_K(\ohs) =\gal(\kab/H_+(K))$, it may be more natural to require
that $\AAA$ is an $H_+(K)$-algebra. This is anyway the case for the
algebra $\AAA$ we define below.
\end{remark}

For the full Bost-Connes system  $(A_K, \sigma^K)$, whose construction we recall below, it is shown in~\cite{Y} that there is a canonical choice of an arithmetic subalgebra $\AAA_K$. By~\cite[Theorem~2.4]{LNT} we know also that $C_r^*(\Pk, \Po)$ can be realized as a full corner $p_KA_Kp_K$ of $A_K$. Similarly to the isomorphism in Proposition~\ref{prop:iso}, this realization, however, depends on the choice of a character $\chi$ of $\akf$ to implement the duality between $\kmo$ and $\hatok$.
Our main result about arithmetic subalgebras and fabulous ground states for the Hecke $C^*$-algebra of the orientation preserving affine group of a number field is as follows.
\begin{theorem} \label{thm:arithm}
Let $\AAA_K$ be the arithmetic subalgebra of $A_K$ associated in \cite{Y} to the Bost-Connes system of a number field $\kk$; then the $K$-subalgebra $\AAA\subset C_r^*(\Pk, \Po)$ corresponding to $p_K\AAA_Kp_K$
in the isomorphism of $C_r^*(\Pk, \Po)$ to $p_KA_Kp_K$ has the following properties:
\begin{enumerate}
\item
$\AAA$ is independent of the choice of a character~$\chi$ used to identify $C_r^*(\Pk, \Po)$ with the corner $p_KA_Kp_K$ of the full Bost-Connes algebra. Explicitly, $\AAA$ coincides with the fixed point algebra
$$
\hh_{\kab}(\pk, \po)^\ohs
$$
with respect to an action $\beta$ of the group $\ohs$ on the Hecke algebra $\hh_{\kab}(\pk, \po)$ of the pair $(\pk, \po)$ over the field $\kab$. This action is defined by
$$
\beta(u)\left(a \left[{\Po\left(\begin{array}{ll}1&r\\0&1\end{array}\right)\Po}\right]\right)
=r_K(u)(a)\left[{\Po\left(\begin{array}{ll}1&N_{K\vert \Q}(u)u^{-1}r\\0&1\end{array}\right)\Po}\right]\ \ \text{for}
\ \ a\in\kab,
$$
where $N_{K\vert \Q}$ is the norm map for the field extension $K/\Q$, extended to a map $\aks\to{\mathbb A}_{\Q}^*$, and $r_K(u)(a)$ is simply the Galois action of $r_K(u)$ on $a$.
\item
$\AAA$ is an arithmetic subalgebra of $C_r^*(\Pk, \Po)$. Specifically, every extremal $\sigma$-ground state invariant under the dual coaction of $K^*_+$ is a fabulous state with respect to $\AAA$.
\end{enumerate}
\end{theorem}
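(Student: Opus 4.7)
The plan is to deduce both parts from the arithmetic structure of the Bost-Connes system via the isomorphism $\heck\cong p_KA_Kp_K$ of \cite{LNT} recalled in \proref{prop:iso}, together with the fabulous property of $\AAA_K$ for $A_K$ established in \cite{Y}.

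For part (1), my first step is to note that the isomorphism of \proref{prop:iso} depends on the auxiliary character $\chi$ only through an element of $\ohs$: replacing $\chi$ by $\chi(u\,\cdot)$ with $u\in\ohs$ amounts to composing with the automorphism $\tau(u^{-1})$ of $\heck$ defined in~\eqref{esymm}. Since $\AAA_K$ is preserved, up to the Galois action on $\kab$-coefficients, by the $\ohs$-symmetries of the Bost-Connes system, its pullback $p_K\AAA_Kp_K\subset\heck$ is independent of $\chi$ once we cut down to $\ohs$-fixed vectors. To identify $\AAA$ explicitly with $\hh_{\kab}(\pk,\po)^\ohs$, I would chase the generators of $\AAA_K$ through the isomorphism: the character-like generators land, by the explicit formula of \proref{prop:iso}, in the $\kab$-span of functions $\chi(\cdot\,uy)$ for $y\in K$ and $u\in\ohs$, and, when coupled with the $\mu_a$ and averaged against $\ohs$, they span precisely the $\beta$-fixed part of $\hh_{\kab}(\pk,\po)$. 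The twist $a\mapsto r_K(u)(a)$ on scalars combined with $r\mapsto N_{K|\Q}(u)u^{-1}r$ on the affine part in the definition of $\beta$ reflects the interplay between the $\ohs$-symmetries and the Galois action, the norm factor $N_{K|\Q}(u)$ accounting for the difference between the natural $\ohs$-actions on $\hatok$ and on $\kmo$.

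For part (2), my first step is to classify the extremal $\sigma$-ground states of $(\heck,\sigma)$ that are invariant under the dual coaction $\hat\alpha$. By \thmref{thm:gs-for-redHecke}(1) and \corref{cor:boundaryalg}, ground states correspond to states of $\bigoplus_{c\in\clkp}\Mat_{k_c}(\C)\otimes C(\ohs/\opsbar)$. The coaction $\hat\alpha$ restricts to the boundary groupoid algebra with its natural $(\kps/\ops)_0$-grading, and by \corref{idealratios} the matrix unit $E_{ij}^c$ in the $c$-block carries the nontrivial degree $\aid_{c,i}\aid_{c,j}^{-1}$ whenever $i\ne j$. Hence $\hat\alpha$-invariant states are supported on the (commutative) diagonal, and the extremal $\hat\alpha$-invariant ground states are parametrized by triples $(c,i,\omega)$ with $c\in\clkp$, $1\le i\le k_c$ and $\omega\in\ohs/\opsbar$. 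The second step is to match this family with the corresponding class of extremal $\hat\alpha_K$-invariant ground states of $(A_K,\sigma^K)$ via the corner embedding: each such state on $\heck$ is the normalised restriction of an extremal $\hat\alpha_K$-invariant ground state of $A_K$ to $p_KA_Kp_K$. The fabulous property for $\AAA_K$ on this class of states, established in \cite{Y}, then transfers to $\AAA$ on $\heck$, with the Galois intertwining with $\tau$ via $r_K$ following from the analogous intertwining on the Bost-Connes side together with the explicit form~\eqref{esymm} of $\tau$ and the compatibility of the Bost-Connes $\ohs$-symmetries with the Artin reciprocity map.

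The main obstacle, in my view, is part (1), specifically the verification that $p_K\AAA_Kp_K$ coincides with the $\beta$-fixed points of the Hecke algebra over $\kab$. The description of $\AAA_K$ in \cite{Y} is intrinsically adelic, and translating it into the Hecke picture requires precise bookkeeping of both the Galois action on $\kab$-coefficients and the affine-part twist induced by $\ohs$; the correction $N_{K|\Q}(u)u^{-1}$ appearing in $\beta$ is where normalisation mistakes are most likely to creep in and will need to be pinned down by a careful direct computation on the generators $\mu_a$ and $e_r$.
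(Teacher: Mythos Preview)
Your overall strategy for part~(1) is aligned with the paper's, but you are missing the one concrete ingredient needed to resolve exactly the obstacle you flag: the Weil compatibility formula $\rho\circ r_K = r_\Q\circ N_{K|\Q}$ between the Artin maps for $K$ and $\Q$ (with $\rho$ the restriction $\gal(\kab/K)\to\gal(\Q^{\mathrm{ab}}/\Q)$). The paper packages this into a short lemma showing that, under $F_K$, the action $\beta_K|_{\gal(\kab/H_+(K))}$ on $p_K(J_K^+\ltimes C(Y_K,\kab))p_K$ becomes precisely the action $\beta$. The computation is on the characters $\chi(\cdot\,y)$: one must check $r_K(u)(\chi(z))=\chi(N_{K|\Q}(u)z)$, and this is exactly Weil's formula combined with the elementary fact $r_\Q(w)(\chi_\Q(1))=\chi_\Q(w)$ for $w\in\hat\Z^*$. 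Without this, your ``chase the generators'' step cannot pin down the $N_{K|\Q}(u)u^{-1}$ twist, and your argument for independence of $\chi$ remains heuristic. Once this lemma is in hand, the identification $\AAA=\hh_{\kab}(\pk,\po)^{\ohs}$ follows because the $\chi(\cdot\,y)$ span the locally constant $\Q^{\mathrm{ab}}$-valued (hence $\kab$-valued) functions on $\hatok$, so the preimage of $p_K(J_K^+\ltimes C(Y_K,\kab))p_K$ is exactly $\hh_{\kab}(\pk,\po)$.

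For part~(2) your identification of the extremal $\hat\alpha$-invariant ground states with points of $Y_0$ is correct and agrees with the paper. However, the paper does \emph{not} transfer the fabulous property from \cite{Y} through the corner embedding as you propose; instead it proves it directly. The argument is that each $z\in Y_0$ has trivial stabiliser in $\gal(\kab/H_+(K))$ (since $Y_0$ sits inside the idele-class part of $Z_{H_+(K)}$), and $Z_{H_+(K)}$ is a projective limit of finite $\gal(\kab/H_+(K))$-sets; given $a\in\kab$ with stabiliser $H$, one passes to a finite quotient where the image of $z$ has stabiliser contained in $H$ and builds an equivariant function with value $a$ at $z$. Your transfer route is plausible in spirit, but it needs an extra step you do not supply: the fabulous states in \cite{Y} are the extremal low-temperature KMS states of $A_K$, parametrised by $\gal(\kab/K)$, and you would have to check that restricting such a state to $p_KA_Kp_K$ and renormalising yields precisely a ground state of the Hecke system corresponding to a point of $Y_0$, and conversely that every $Y_0$-point arises this way. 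The paper's direct argument sidesteps this matching entirely.
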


Before we embark on the proof of the theorem, we shall revisit the basic facts of the construction of the Bost-Connes system $(A_K, \sigma^K)$ associated to the number field $K$, see \cite{ha-pa,LLN}.
%We shall follow  \cite{LNT} and phrase the construction in terms of induction.

%First we fix our  notation. We denote by $\akf=K\otimes_{\OO}\hat\OO$ the ring of
%finite adeles, the ring of adeles by $\ak=K_\infty\times\akf$, and the idele group by $I_K=\ak^*$.

The multiplication action of $\ohs$ on $\akf$ can be induced to an action of $\gal(\kab/K)$ using the homomorphism $\ohs\to\gal(\kab/K)$ obtained from  restricting the Artin map $r_K$ to the subgroup $\ohs$. This gives rise to the balanced product
\[
X_K := \gal(\kab / K)\times_{\ohs} \akf.
\]
The left action of $\ohs$ on $\gal(\kab / K)\times \akf$ that is responsible for the balancing is consistent with  the action of $\jkf$ on $\gal(\kab/K)\times \akf$ given by
\[
g (\gamma, x) = (\gamma
r_K(g)^{-1} , gx)
\]
for $g\in \jkf$ and $x\in \akf$. There is therefore an action of the
quotient group $ \jkf/\ohs$, naturally identified with the discrete
group $J_K$ of fractional ideals in $K$, on $X_K$. Let $J_K^+\subset J_K$ be the subsemigroup of integral ideals. Finally, restricting to the clopen subset
\[
Y_K := \gal(\kab /
K)\times_\ohs \hat\OO
\]
of $X_K$ and letting  $p$ denote the projection in $J_K\ltimes C_0(X_K)$ corresponding to $Y_K$, we obtain the $C^*$-algebra of the Bost-Connes system:
\[
A_K:=C^*(Y_K\boxtimes  J_K) = p\bigl(J_K\ltimes C_0(X_K)\bigr)p\cong J_K^+\ltimes C(Y_K),
\]
where the  isomorphism is from Theorems~2.1 and~2.4 of \cite{minautex}.

The dynamics $\sigma^K$ on $A_K$, and more generally on $J_K\ltimes C_0(X_K)$,  is defined using the absolute norm $N_K$ on $J_K$:
$$
\sigma^K_t(f\lambda_g)=N_K(g)^{it}f\lambda_g
$$
for $f\in C_0(X_K)$ and $g\in J_K$.

The Galois group $\gal(\kab/K)$ acts on $X_K$ by
\begin{equation*}
\gamma (\gamma',m)=(\gamma\gamma',m)
\end{equation*}
for $\gamma,\gamma'\in \gal(\kab/K)$ and $ m\in \akf$. This gives an action $\tau_K$ of $\gal(\kab/K)$ on $A_K$ and $J_K\ltimes C_0(X_K)$ such that
\begin{equation}\label{galois-action-YK}
\tau_K(\gamma)(f\lambda_g)=f(\gamma^{-1}\cdot)\lambda_g
\end{equation}
for  $f\in C(X_K)$, $\gamma\in \gal(\kab/K)$ and $g\in J_K$.

\smallskip

%By \cite[Theorem 2.4]{LNT} the $C^*$-algebra $C_r^*(\Pk, \Po)$ is isomorphic to a full corner in  $A_K$.
For the proof of \thmref{thm:arithm} we will need the notation and a more explicit description of the isomorphism established in \cite[Theorem 2.4]{LNT} together with some of the details of its proof, so we summarize those results next.
\begin{theorem}{\em(cf. \cite[Theorem 2.4]{LNT})}.\label{thm:iso2}
Denote by $p_K$ the projection in $A_K$ associated to the compact open subset
 \[
Z_{H_+(K)}: =\gal(\kab/H_+(K))\times_{\hatok^*} \hatok
\]
of $Y_K$ and recall that the Artin map induces an isomorphism $\hatok^*/\opsbar \cong \gal(\kab/H_+(K))$.
Then the obvious map $\hatok\to\gal(\kab/H_+(K))\times \hatok$ given by $a\mapsto (e,a)$
gives a homeomorphism of $\hatok/\opsbar$ onto $Z_{H_+(K)}$. Using this homeomorphism and
viewing $\kps/\ops$ as a subgroup of $J_K$ we obtain an isomorphism
$$
p_{\hatok/\overline{\OO_+^*}}\bigl(K_+^*/\OO_+^*\ltimes C_0(\akf/\overline{\OO_+^*})\bigr)p_{\hatok/\overline{\OO_+^*}}\cong p_K A_K p_K
=p_K\bigl(J_K\ltimes C_0(X_K)\bigr)p_K.
$$
Composing this with the isomorphism
$$
C_r^*(\Pk, \Po)\cong   p_{\hatok/\overline{\OO_+^*}}\bigl(K_+^*/\OO_+^*\ltimes C_0(\akf/\overline{\OO_+^*})\bigr)p_{\hatok/\overline{\OO_+^*}}
$$
from \proref{prop:iso} gives an isomorphism
\[
F_K:C_r^*(\Pk, \Po)\to p_K A_K p_K.
\]
%from \cite[Theorem 2.4]{LNT}.
\end{theorem}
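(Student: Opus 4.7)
The plan is to prove the theorem in four steps, largely following the strategy of \cite[Theorem~2.4]{LNT} while making the identifications explicit. First, I would verify that $Z_{H_+(K)}$ is compact open in $Y_K$: since $H_+(K)/K$ is a finite abelian extension, $\gal(\kab/H_+(K))$ is a clopen subgroup of $\gal(\kab/K)$, so $\gal(\kab/H_+(K))\times\hatok$ is an $\ohs$-invariant compact open subset of $\gal(\kab/K)\times\hatok$; its image $Z_{H_+(K)}$ is therefore compact open in $Y_K$ and $p_K$ is a well-defined projection in $A_K$.

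Second, I would establish the homeomorphism $\hatok/\opsbar \cong Z_{H_+(K)}$ sending $a$ to $[(e, a)]$. The balancing identifies $(\gamma, a)$ with $(\gamma r_K(u)^{-1}, ua)$ for $u\in\ohs$, so $(e, a)\sim (e, a')$ iff there is $u\in\ohs$ with $r_K(u) = e$ and $a' = ua$. Since the Artin map gives $\ohs/\opsbar \cong \gal(\kab/H_+(K))$, the kernel of $r_K|_{\ohs}$ is $\opsbar$, which yields well-definedness and injectivity. Surjectivity follows because any $\gamma \in \gal(\kab/H_+(K))$ equals $r_K(u)^{-1}$ for some $u\in\ohs$, so $[(\gamma, a)] = [(e, ua)]$. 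As $\hatok/\opsbar$ is compact and $Z_{H_+(K)}$ Hausdorff, the continuous bijection is a homeomorphism.

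Third, and crucially, I would identify the reduction $Z_{H_+(K)}\boxtimes J_K$ with $(\kps/\ops)\boxtimes(\hatok/\opsbar)$. Using the $J_K$-action $g\cdot[(\gamma, a)] = [(\gamma r_K(g)^{-1}, ga)]$, an element $g\in J_K$ can move a point of $Z_{H_+(K)}$ into $Z_{H_+(K)}$ iff $r_K(g)\in\gal(\kab/H_+(K))$; by class field theory this holds iff $g$ is a principal ideal with totally positive generator, so iff $g$ lies in the image of $\kps/\ops$ in $J_K = \jkf/\ohs$. For such $g = (x)$ with $x\in\kps$, Artin reciprocity (the triviality of $r_K$ on diagonally embedded elements of $K^*$) lets one absorb the Galois twist $r_K(x)^{-1}$ into the balancing and check that the induced action on $\hatok/\opsbar$ is simply multiplication by $x$. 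Thus the groupoid reduction coincides with $(\kps/\ops)\boxtimes(\hatok/\opsbar)$, giving
$$
p_K A_K p_K \cong p_{\hatok/\opsbar}\bigl(\kps/\ops \ltimes C_0(\akf/\opsbar)\bigr)p_{\hatok/\opsbar},
$$
and composing with \proref{prop:iso} produces $F_K$.

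The main obstacle lies in the third step: verifying that, under the identification $Z_{H_+(K)} \cong \hatok/\opsbar$, the action of the principal fractional ideal $(x)$ for $x\in\kps$ becomes literally multiplication by $x$ on $\hatok/\opsbar$. This is where one must carefully track the global Artin reciprocity formula relating the archimedean and finite components of $x$ viewed inside the full idele group $\aks$, so that the twist $r_K(x)^{-1}$ applied to the Galois factor is exactly counterbalanced by the multiplication by $x$ on the adelic factor, up to the balancing by $\opsbar$.
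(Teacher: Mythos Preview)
The paper does not give a proof of this theorem; it is presented as a summary, with explicit notation, of \cite[Theorem~2.4]{LNT}, as the paragraph preceding the statement makes clear. Your outline correctly reconstructs the argument: the clopenness of $Z_{H_+(K)}$, the homeomorphism $\hatok/\opsbar\cong Z_{H_+(K)}$ via the identification of $\ker(r_K|_{\ohs})$ with $\opsbar$, the class-field-theoretic fact that $r_K(g)\in\gal(\kab/H_+(K))$ precisely for $g$ in the image of $\kps/\ops$, and the use of Artin reciprocity (triviality of $r_K$ on the finite idele attached to a totally positive $x$, since the archimedean part lies in the identity component) to see that the induced action is multiplication by $x$---these are exactly the ingredients one needs, and your flagged ``main obstacle'' is the genuine technical point.
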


As an immediate corollary we see that $F_K$ intertwines the action $\tau$ of $\ohs$ on $C_r^*(\Pk, \Po)$ defined in \eqref{esymm} with the action $(\tau_K\circ r_K)|_{\ohs}$ on $p_K A_K p_K$ obtained from \eqref{galois-action-YK}. On the algebra
$$
p_{\hatok/\overline{\OO_+^*}}\bigl(K_+^*/\OO_+^* \ltimes C_0(\akf/\overline{\OO_+^*})\bigr)p_{\hatok/\overline{\OO_+^*}}
$$
the same action is implemented by multiplication by the elements of $\ohs$ on $\akf/\overline{\OO_+^*}$.

\medskip

As for the arithmetic subalgebra of $A_K$, the $K$-subalgebra $\AAA_K$ of $A_K$ is generated by the elements $p\lambda_g p$ for $g\in J_K$ and the algebra of locally constant $\gal(\kab/K)$-equivariant functions $Y_K\to \kab$, see~\cite[Section~10]{Y}. In other words, if we denote by $J_K^+\ltimes C(Y_K,\kab)$ the $\kab$-subalgebra of $A_K=J_K^+\ltimes C(Y_K)$ generated by the elements $p\lambda_g p$ for $g\in J_K$ and the algebra $C(Y_K,\kab)$ of locally constant $\kab$-valued function on $Y_K$, then $\AAA_K$ is the fixed point $K$-subalgebra of $J_K^+\ltimes C(Y_K,\kab)$ under the action $\beta_K$ of~$\gal(\kab/K)$ defined by
$$
\beta_K(\gamma)(fp\lambda_gp)=\gamma(f(\gamma^{-1}\cdot))p\lambda_gp,
$$
for $\gamma \in \gal(\kab/K)$, $g\in J_K$ and $f \in C(Y_K,\kab)$.

In order to understand what happens to the action $\beta_K$ under the isomorphism $F_K$, recall an important observation from \cite{weil}. Denoting by $\rho:\gal(\kab/K)\to \gal(\Q^{\operatorname{ab}}/\Q)$ the homomorphism given by restriction, we have
\begin{equation}\label{eq:weil}
\rho\circ r_K=r_{\Q}\circ N_{K\vert \Q},
\end{equation}
see \cite[Corollary 1, page 246]{weil}. From this we get the following result.

\begin{lemma}[{cf.~\cite[Theorem 4.4]{LvF}}]\label{fromweil}
Under the isomorphism $F_K$ the action $\beta_K|_{\gal(\kab/H_+(K))}$ on $p_K(J_K^+\ltimes C(Y_K,\kab))p_K$ is transformed into the action $\beta$ defined in Theorem~\ref{thm:arithm}.
\end{lemma}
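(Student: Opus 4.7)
The plan is to verify the intertwining on the two types of generators of $p_K(J_K^+\ltimes C(Y_K,\kab))p_K$, namely the shifts $p_K\lambda_{g}p_K$ for $g\in J_K$ and the locally constant $\kab$-valued functions on $Z_{H_+(K)}$, and then appeal to the explicit formulas of \proref{prop:iso}. Since $\beta_K$ acts trivially on the unitaries $\lambda_g$ and only twists the $\kab$-valued coefficient functions, the real work is to check what happens to the functions $F_K(e_y)=\frac{1}{R(y)}\int_{\opsbar}\chi(\cdot\,vy)\,dv$ under the restriction of $\beta_K$ to $r_K(\ohs)\subset\gal(\kab/H_+(K))$, and to compare with $\beta(u)$ evaluated on $e_y=\frac{1}{R(y)}[\po\smatr{y}{1}\po]$.

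First I would unravel the balanced product description of $Z_{H_+(K)}=\gal(\kab/H_+(K))\times_{\hatok^*}\hatok$. Since $\gal(\kab/K)$ is abelian, the computation $r_K(u)\cdot[(\gamma',m)]=[(r_K(u)\gamma',m)]=[(\gamma',um)]$ (using the balancing relation for $u\in\ohs$) shows that under the homeomorphism $\hatok/\opsbar\cong Z_{H_+(K)}$ of \thmref{thm:iso2}, the action of $r_K(u)^{-1}$ on the unit space is just multiplication by $u^{-1}$. This is the same translation action already built into $\tau(u)$, so the part of $\beta_K(r_K(u))$ that shifts the argument of the function is accounted for by the map $e_y\mapsto e_{u^{-1}y}$.

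The remaining ingredient is the Galois twist of the values of $\chi$. Here I would invoke \eqref{eq:weil}: for $u\in\ohs$ the restriction $r_K(u)|_{\Q^{\operatorname{ab}}}$ equals $r_\Q(N_{K|\Q}(u))$, and for the character $\chi=\chi_\Q\circ\operatorname{Tr}_{K/\Q}$ used to define $F_K$, the standard Artin-reciprocity relation for $\Q$ reads $r_\Q(a)(\chi_\Q(t))=\chi_\Q(at)$ for $a\in\hat\Z^*$, $t\in\af$. Combining this with the $\af$-linearity of the trace (via $\akf=K\otimes_\Q\af$) yields the master identity
\[
r_K(u)\bigl(\chi(z)\bigr)=\chi\bigl(N_{K|\Q}(u)\,z\bigr)\qquad(z\in\akf,\ u\in\ohs).
\]

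Applying this to $z=u^{-1}mvy$ and pulling the scalar $N_{K|\Q}(u)\in\af^*\subset\akf^*$ past $mv$ in the argument of $\chi$ gives
\[
\beta_K(r_K(u))(F_K(e_y))(m)=\tfrac{1}{R(y)}\int_{\opsbar}\chi\bigl(mv\cdot N_{K|\Q}(u)u^{-1}y\bigr)\,dv,
\]
which, after the trivial rescaling by $R(y)^{-1}R(N_{K|\Q}(u)u^{-1}y)$, is precisely $F_K(e_{N_{K|\Q}(u)u^{-1}y})$ times $\tfrac{R(N_{K|\Q}(u)u^{-1}y)}{R(y)}$. On the Hecke side this matches $F_K(\beta(u)(e_y))$ because $\beta(u)$ sends the class of $\smatr{y}{1}$ to $\smatr{N_{K|\Q}(u)u^{-1}y}{1}$ and the normalizing constants $R(\cdot)$ are absorbed in converting between $e_r$ and $[\po\smatr{r}{1}\po]$. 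The corresponding check for the shifts $F_K(\mu_a)=p_K\lambda_{(a)}p_K$ is immediate since those elements carry no $\kab$-valued component. The main obstacle is the careful bookkeeping of conventions in the final paragraph: one has to match the normalization of $r_\Q$ used in \eqref{eq:weil} with the convention under which $\tau_K\circ r_K$ implements $\tau$ on $C_r^*(\Pk,\Po)$, which is what forces the $N_{K|\Q}(u)u^{-1}$ rather than $u^{-1}$ to appear in the definition of $\beta$.
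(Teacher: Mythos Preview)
Your approach is essentially the same as the paper's: both reduce the verification to the ``master identity'' $r_K(u)(\chi(z))=\chi(N_{K\vert\Q}(u)z)$ via Weil's compatibility \eqref{eq:weil}, and both observe that this identity, applied to the averaged characters $\chi(\cdot\,vy)$ representing $F_K(e_y)$, yields exactly the shift $y\mapsto N_{K\vert\Q}(u)u^{-1}y$ prescribed by~$\beta$. Your proof spells out more of the bookkeeping (the balanced-product computation for the translation action, and the $R(y)$ normalizations) than the paper's terse version, but the structure is identical.

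One small issue: you assert that $\chi=\chi_\Q\circ\operatorname{Tr}_{K/\Q}$ is the character used to define $F_K$, and derive the master identity from this specific form. In general this choice of $\chi$ does \emph{not} satisfy the hypothesis of \proref{prop:iso} (the annihilator of $\hatok$ under $\chi_\Q\circ\operatorname{Tr}_{K/\Q}$ is the inverse different, not $\hatok$, unless $K=\Q$). The paper avoids this by proving the master identity for an arbitrary character: for fixed $z\in\akf$, the map $t\mapsto\chi(tz)$ is a character of $\af$, and the general fact $r_\Q(w)(\chi_\Q(1))=\chi_\Q(w)$ for any character $\chi_\Q$ of $\af$ and $w\in\hat\Z^*$ then gives $r_\Q(w)(\chi(z))=\chi(wz)$. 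This patches your argument with no change to its overall shape.
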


\bp Since for each $y \in \kk$ the image of
$$
R(y)e_y=\left[\po\matr{y}{1} \po\right]
$$
in $C(\hatok/\opsbar)$ under the isomorphism from Proposition~\ref{prop:iso} is an average of finitely many characters of~$\hatok$ of the form $\chi(\cdot vy)$, $v\in\opsbar$, it suffices to show that, for every $u\in\ohs$, we have
$r_K(u)(\chi(u^{-1}z))=\chi(N_{K\vert \Q}(u)u^{-1}z)$ for all $z\in\akf$. Equivalently, we must show that
$$
r_K(u)(\chi(z))=\chi(N_{K\vert \Q}(u)z)\ \ \text{for all}\ \ z\in\akf.
$$
But this follows from \eqref{eq:weil} and the fact that $r_\Q(w)(\chi_\Q(1))=\chi_\Q(w)$ for any character $\chi_\Q$ of $\mathbb{A}_{\Q,f}$ and $w\in\hat\Z^*$.
\ep

\bp[Proof of Theorem~\ref{thm:arithm}]
From the explicit form of the isomorphism $F_K$ given in Theorem~\ref{thm:iso2} and Proposition~\ref{prop:iso} we see that the preimage of $p_K(J_K^+\ltimes C(Y_K,\kab))p_K$ in $\heck$ coincides with the Hecke algebra $\hh_{\kab}(\pk, \po)$, since the functions $\chi(\cdot y)$ for $y\in\OO$ span over $\Q^{\operatorname{ab}}$ the space of locally constant $\Q^{\operatorname{ab}}$-valued functions on $\hatok$, hence they span over $\kab$ the space of locally constant $\kab$-valued functions on $\hatok$. Part (1) of the theorem now follows from \lemref{fromweil}.

In order to prove the second part of Theorem~\ref{thm:arithm} it
remains to show that every extremal $\sigma$-ground state invariant
under the dual coaction of $K^*_+$ is a fabulous state. By
Theorem~\ref{thm:gs-for-redHecke} the ground states correspond to
the states of $C^*(\kps/\ops \boxtimes Y_0)$, among which the ones that are invariant
under the dual coaction of $K^*_+$ correspond to the probability
measures on $Y_0$. The ground state corresponding to such a measure
is extremal if and only if the measure is a point mass.
So the space of extremal ground states that are invariant under the dual coaction
can be identified with $Y_0$.

Using our identification of $\hat\OO/\overline{\OO_+^*}$ with
$Z_{H_+(K)}=\gal(\kab/H_+(K))\times_{\hatok^*} \hatok$ we view $Y_0$
as a subset of $Z_{H_+(K)}$. Then in order to finish the proof it
suffices to check the following. If $\B$ is the $K$-algebra of
locally constant $\gal(\kab/H_+(K))$-equivariant functions
$Z_{H_+(K)}\to \kab$, then for every point $z\in Y_0\subset
Z_{H_+(K)}$ the set $\{f(z)\mid f\in\B\}$ generates $\kab$ as a
$K$-algebra. The proof of this is similar to the analogous property of
$\AAA_K$ in~\cite{Y} and  relies on the following two key
points: the $\gal(\kab/H_+(K))$-space $Z_{H_+(K)}$ is a projective
limit of finite $\gal(\kab/H_+(K))$-spaces $Z_n$ and every point
$z\in Y_0$ has trivial stabilizer in $\gal(\kab/H_+(K))$. To see this,
take any $a\in\kab$ and let~$H$ be the stabilizer of $a$ in
$\gal(\kab/H_+(K))$. Since $H$ is an open subgroup of the compact
group $\gal(\kab/H_+(K))$, there is a sufficiently large $n$ such that
the stabilizer of the image $z_n$ of $z$ in $Z_n$ is contained in
$H$. We can then define a $\gal(\kab/H_+(K))$-equivariant function
$f_n$ on $Z_n$ such that $f_n(z_n)=a$ by letting $f_n(\gamma
z_n)=\gamma(a)$ and $f=0$ outside the orbit of $z_n$. Then composing
$f_n$ with the projection $Z_{H_+(K)}\to Z_n$ we get a function
$f\in\B$ such that $f(z)=a$. \ep

\bigskip

\end{document}